\renewcommand{\bar}{\overline}
\renewcommand{\bar}{\overline}
\def \r{\mathbb R}
\def \z{\mathbb Z}
\DeclareMathOperator{\grad}{grad}
\DeclareMathOperator{\sgn}{sgn}%\DeclareMathOperator{\MOD}{mod}
\newtheorem{theorem}{Theorem}[section]
\newtheorem{lemma}[theorem]{Lemma}
\newtheorem{proposition}[theorem]{Proposition}
\newtheorem{corollary}[theorem]{Corollary}
\theoremstyle{remark}
\newtheorem{remark}[theorem]{Remark}
\theoremstyle{definition}
\newtheorem{example}[theorem]{Example}
\newtheorem{problem}{Problem}
\title{On the flexibility of Kokotsakis meshes.}
\author{Oleg N. Karpenkov}
\date{09 December 2008}
\thanks{The work is partially supported by RFBR SS-709.2008.1
grant, by RFBR 00-01-02805-CNRSL\_a grant, and by  FWF grant
No.~S09209.}
\keywords{Meshes, flexibility, infinitesimal flexibility}
\email[Oleg N. Karpenkov]{karpenkov@tugraz.at}
\address{TU Graz /Kopernikusgasse 24, A 8010 Graz, Austria/}
\begin{document}
\input epsf

\begin{abstract}
In this paper we study geometric, algebraic, and computational
aspects of flexibility and infinitesimal flexibility of Kokotsakis
meshes. A Kokotsakis mesh is a mesh that consists of a face in
the middle and a certain band of faces attached to the middle face by
its perimeter. In particular any $(3\times 3)$-mesh made of
quadrangles is a Kokotsakis mesh. We express the infinitesimal
flexibility condition in terms of Ceva and Menelaus theorems.
Further we study semi-algebraic properties of the set of flexible
meshes and give equations describing it. For $(3\times 3)$-meshes
we obtain flexibility conditions in terms of face angles.
\end{abstract}

\maketitle

\tableofcontents

\section*{Introduction}

Consider a mesh having one special $n$-vertex face and a ``strip''
of faces attached to the special face along the
perimeter, such that any vertex of a special face is a vertex of
exactly three faces in the strip. This mesh is called a {\it
Kokotsakis mesh}. Kokotsakis meshes with polygonal faces are known
in the literature as {\it polyhedral surfaces of Kokotsakis type}.
In this paper we study conditions of flexibility and infinitesimal
flexibility for Kokotsakis meshes.

The first steps in studying such meshes were made in 1932 by
A.~Kokotsakis in his paper~\cite{Kok}, where he found conditions
of infinitesimal flexibility. Recently Kokotsakis meshes for
$4$-gons (such meshes are also called {\it $(3{\times}
3)$-meshes}) were used in the study of discrete conjugate
nets  flexibility. A.~I.~Bobenko, T.~Hoffmann, W.~K.~Schief proved
in~\cite{BHS} that a nondegenerate discrete conjugate net is isometrically
deformable if and only if all its $3{\times} 3$ subcomplexes are
isometrically deformable $(3{\times} 3)$-meshes. In~\cite{BHS}
they also showed that the conjugate nets admit infinitesimal
second-order deformations if and only if their reciprocal-parallel
surfaces constitute discrete Bianchi surfaces (see
also~\cite{Bia}).

In this paper we give a geometrical interpretation of
infinitesimal flexibility in terms of Ceva and Menelaus
configurations. Further we work with equations that define the
set of flexible Kokotsakis meshes, in particular we show
semi-algebraicity of this set. The technique developed in this
paper allows us to calculate linear independence of the gradients
of the first 6 flexibility conditions for a chosen flexible
quadrangular Kokotsakis mesh. This means that we can check if the
codimension of a stratum of meshes is smaller than or equal to 6.
The computational complexity of the problem does not allow us to
proceed further. We remind the reader that the codimension of the set of
flexible Voss meshes as well as that of symmetric flexible meshes is 8,
for information on Voss surfaces we refer to~\cite{BHS}. So it
is most probable that the codimensions of irreducible components
coincide, and equal~6, 7, or~8. If we are lucky and these
codimensions are exactly~8, than the geometric solution of the
problem can be obtained by enumeration of all possible components.

%The above flexibility conditions are quite hard to check by
%calculations, so
%it is important to simplify them.
To check the conditions at a
single point we introduce more simple ``flexibility vector
calculus'' conditions. In the case of $(3{\times} 3)$-meshes we now
able to calculate the 8 first such conditions.
%It seems that
%it is not possible to use conditions in vector form
%to calculate codimensions of the components.

Finally, for the case of quadrangular Kokotsakis $(3{\times}
3)$-meshes we show that the property of the mesh to be flexible
depends only on the shapes of faces but not on the geometry of the
mesh in $\r^3$. We propose an algorithm to write these formulae
explicitly.

\vspace{2mm}

{\bf Organization of the paper.} We start in Section~1 with
showing an infinitesimal flexibility condition in algebraic and
geometric terms. Section~2 studies properties of the set of
flexible meshes. We describe equations defining this set.
In Section~3 we introduce a technique to write
flexibility conditions in terms of angles of the faces of meshes.
We conclude the paper in Section~4 with some open questions for further study.

\vspace{2mm}

{\bf General notation.} To avoid the annoying description of
multiple cases we consider the index $i$ belonging to the cyclic
group $\z/n\z$. We also choose a partial {\it ordering}
$1<2<3<\ldots<n$ for this group.

\vspace{1mm}

We denote the vertices of the special face in the middle of the
mesh by $A_i$, $i=1,\ldots,n$, preserving their order at the
boundary. Let $a_i={A_{i+1}-A_i}$. Notice that by definition we
have $\sum_{i=1}^{n}a_i=0$.

Consider a vertex $A_i$ ($i=1,\ldots, n$). We denote vertices
$V_i$ and $W_i$ such that the four faces meeting at $A_i$ contain
angles $A_{i+1}A_iA_{i-1}$, $A_{i-1}A_iV_i$, $V_iA_iW_i$, and
$W_iA_iA_{i+1}$. We denote these angles by $\alpha_i$, $\beta_i$,
$\gamma_i$, and $\varphi_i$, respectively. Let $v_i=V_i-A_i$ and
$w_i=W_i-A_i$.

\vspace{2mm}

\begin{figure}
$$\epsfbox{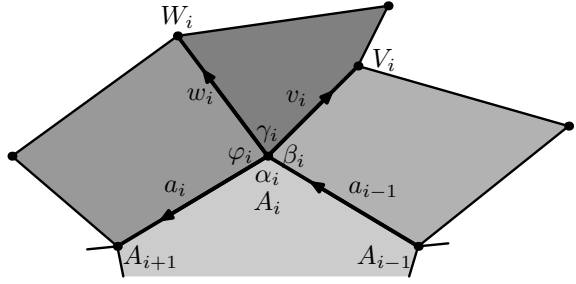}$$
\caption{Notation.}\label{part}
\end{figure}

{\it Remark.} Without loss of generality we assume that the
special face $A_1A_2\ldots A_n$ is fixed, while the other $2n$
faces may move.

\vspace{2mm}

Denote by $\langle u_1, u_2\rangle$ and by $[u_1, u_2]$ the scalar
and vector products, respectively.

\section{Infinitesimal flexibility of Kokotsakis meshes}\label{Sec1}

\subsection{Algebraic conditions}

In this subsection we recall some results on infinitesimal
flexibility from the work~\cite{Kok} by A.~Kokotsakis. We rewrite
his results in a slightly modified way more convenient for us.

Consider a $(2{\times} 2)$-mesh with a common vertex $A$. Let us
fix a face $BAC$. Let {\it BAV}, {\it VAW}, and {\it WAC} be the other three
faces. Denote by $b$, $c$, $v$, and $w$ the vectors $B-A$, $C-A$,
$V-A$, and $W-A$ respectively. This mesh is flexible in general
position and has one degree of freedom.

\begin{proposition}\label{basis}
Suppose that the vectors $v$ and $w$ are not collinear and the
vectors $b$ and $c$ are not in the plane spanned by the vectors
$v$ and $w$. Then for any infinitesimal motion $(\dot{v},\dot{w})$
of the described $(2{\times} 2)$-mesh there exists a non-zero
real $\lambda$ such that
$$
\left\{
\begin{array}{lll}
\dot{v}&=&\displaystyle  \lambda\frac{[b,v]}{(b,v,w)}\\
\dot{w}&=&\displaystyle  \lambda\frac{[c,w]}{(c,v,w)}\\
\end{array}
\right. .
$$
\end{proposition}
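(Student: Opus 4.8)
The plan is to translate the rigidity of the four faces into first-order (linear) constraints on the pair $(\dot{v},\dot{w})$ and then to solve the resulting small linear system explicitly. Since the special face $BAC$ is fixed, the points $A,B,C$ and hence the vectors $b$ and $c$ are constant. As the mesh flexes, each outer face moves rigidly, so it preserves its edge lengths at $A$ and its face angle at $A$. Differentiating these invariants along a flexion, I would record five conditions: $\langle v,\dot{v}\rangle=0$ and $\langle b,\dot{v}\rangle=0$ (the face $BAV$ keeps $|v|$ and the angle $\beta$-type angle $BAV$ fixed); $\langle w,\dot{w}\rangle=0$ and $\langle c,\dot{w}\rangle=0$ (the face $WAC$ keeps $|w|$ and $WAC$ fixed); and $\langle \dot{v},w\rangle+\langle v,\dot{w}\rangle=0$, the derivative of $\langle v,w\rangle$ (the face $VAW$ keeps the angle $VAW$, equivalently $\langle v,w\rangle$, fixed). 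These are all the constraints the faces impose, since any additional vertices of the faces move rigidly with them and contribute nothing new.

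Next I would solve the first four equations. The hypotheses state that $v,w$ are linearly independent and $b,c\notin\mathrm{span}(v,w)$; in particular $b$ and $v$ are not collinear, so the two conditions $\dot{v}\perp v$ and $\dot{v}\perp b$ pin $\dot{v}$ to the line $\r\,[b,v]$, giving $\dot{v}=\mu[b,v]$ for a scalar $\mu$. Symmetrically $\dot{w}=\nu[c,w]$ for a scalar $\nu$. The same hypotheses guarantee that the triple products $(b,v,w)=\langle[b,v],w\rangle$ and $(c,v,w)=\langle[c,v],w\rangle$ are both nonzero, which is exactly what permits dividing by them at the end.

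The single remaining equation couples $\mu$ and $\nu$. Substituting the two expressions into $\langle\dot{v},w\rangle+\langle v,\dot{w}\rangle=0$ and rewriting the scalar products as triple products, I obtain $\mu\,(b,v,w)-\nu\,(c,v,w)=0$ after reordering the arguments of one determinant and tracking the resulting sign, using $\langle v,[c,w]\rangle=-\,(c,v,w)$. Setting $\lambda:=\mu\,(b,v,w)=\nu\,(c,v,w)$ then yields $\mu=\lambda/(b,v,w)$ and $\nu=\lambda/(c,v,w)$, which are precisely the claimed formulas. Finally, if the motion is nontrivial then $(\dot{v},\dot{w})\neq(0,0)$, and since the denominators are nonzero this forces $\lambda\neq0$.

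I do not expect a serious obstacle; the computation is short once the constraints are in place. The two points needing care are (i) confirming that these five differentiated invariants form the complete list of first-order constraints, so that no hidden condition spoils the count that leaves a one-dimensional family, and (ii) the sign bookkeeping in the triple-product identity above, since an error there would flip a sign in the formula for $\dot{w}$.
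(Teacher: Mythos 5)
Your proposal is correct and follows essentially the same route as the paper: both reduce the problem to the same five linear constraints $\langle v,\dot v\rangle=0$, $\langle w,\dot w\rangle=0$, $\langle \dot v,w\rangle+\langle v,\dot w\rangle=0$, $\langle b,\dot v\rangle=0$, $\langle c,\dot w\rangle=0$ obtained by differentiating the rigid-face invariants. The only difference is that the paper merely asserts the rank of this system by ``direct calculations in coordinates,'' whereas you solve it explicitly (and your sign bookkeeping $\langle v,[c,w]\rangle=-(c,v,w)$ is right), so your write-up is in fact more complete on that point.
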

\begin{proof}
Any infinitesimal motion is a solution of the following linear
system with 6 variables (coordinates of $\dot{v}$ and $\dot{w}$)
and 5 equations:
$$
\left\{
\begin{array}{l}
\langle v,\dot{v}\rangle=0\\
\langle w,\dot{w}\rangle=0\\
\langle\dot{v},w\rangle+\langle v,\dot{w}\rangle=0\\
\langle b,\dot{v}\rangle=0\\
\langle c,\dot{w}\rangle=0\\
\end{array}
\right .
.
$$
On one hand the solution mentioned in the statement of the
proposition satisfies this system. On the other hand, if the
conditions on vectors $b$, $c$, $v$, and $w$ hold then the rank of
the linear system is 1 (this follows from direct calculations in
coordinates).
\end{proof}

From Proposition~\ref{basis} we immediately have the following
statement (this statement is another formulation of the result
in~\cite{Kok} by A.~Kokotsakis). For a Kokotsakis mesh $M$ put by
definition
$$
\chi(M)=\prod\limits_{i=1}^n
\frac{(a_{i-1},v_i,w_i)}{(a_i,v_i,w_i)}.
$$

\begin{corollary}\label{flex}
A Kokotsakis mesh $M$ is infinitesimally flexible if and only if
$\chi(M){=}1$. \qed
\end{corollary}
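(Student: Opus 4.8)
The plan is to view the whole mesh as built by gluing together the $n$ local $(2{\times}2)$-meshes centered at the vertices $A_i$, and to track how the single free parameter of each local mesh (the $\lambda$ of Proposition~\ref{basis}) must propagate consistently around the ring of vertices. Infinitesimal flexibility of $M$ will turn out to be equivalent to the existence of a nonzero compatible choice of these parameters, and the obstruction to such a choice is precisely the condition $\chi(M)\neq 1$.

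First I would set up the local picture at each $A_i$. The four faces meeting at $A_i$ form a $(2{\times}2)$-mesh whose central face is spanned by the directions to $A_{i-1}$ and $A_{i+1}$, namely $-a_{i-1}$ and $a_i$, and whose outer vertices are $V_i$ and $W_i$. Applying Proposition~\ref{basis} with $b=-a_{i-1}$, $c=a_i$, $v=v_i$, $w=w_i$ (the extra minus signs cancel between numerator and denominator), every infinitesimal motion of this local mesh has the form
$$
\dot v_i=\lambda_i\frac{[a_{i-1},v_i]}{(a_{i-1},v_i,w_i)},\qquad
\dot w_i=\lambda_i\frac{[a_i,w_i]}{(a_i,v_i,w_i)}
$$
for some real $\lambda_i$.

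Next I would exploit the band structure. The strip face lying on the edge $A_iA_{i+1}$ is shared by the local meshes at $A_i$ and at $A_{i+1}$; concretely $W_i=V_{i+1}$, so that $v_{i+1}=w_i-a_i$. Since the central face is held fixed, the two descriptions of the velocity of this common outer vertex must coincide, giving the gluing equations $\dot w_i=\dot v_{i+1}$ for every $i$. Using $v_{i+1}=w_i-a_i$ together with $[a_i,a_i]=0$ one gets the key simplification $[a_i,v_{i+1}]=[a_i,w_i]$, so after cancelling the common nonzero vector $[a_i,w_i]$ the gluing equation collapses to the scalar recursion
$$
\frac{\lambda_i}{(a_i,v_i,w_i)}=\frac{\lambda_{i+1}}{(a_i,v_{i+1},w_{i+1})}.
$$

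Finally I would close the loop. A nontrivial infinitesimal flex of $M$ exists if and only if this recursion admits a nonzero solution $(\lambda_1,\dots,\lambda_n)$ around the cyclic index; in general position (all triple products nonzero) each $\lambda_{i+1}$ is determined from $\lambda_i$ by a nonzero factor, so a nonzero consistent choice exists exactly when the product of these factors over the full cycle equals $1$. Taking the cyclic product of the recursion, the $\lambda_i$ telescope away, and after the reindexing $\prod_i (a_i,v_{i+1},w_{i+1})=\prod_i(a_{i-1},v_i,w_i)$ the consistency condition becomes exactly $\chi(M)=1$. The main obstacle is the bookkeeping in the third step: correctly identifying the shared face through $W_i=V_{i+1}$, keeping the orientation conventions for $b$ and $c$ consistent between neighbouring vertices, and verifying that the velocity of the shared vertex is unambiguous precisely because the central face is fixed. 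Once that identification and the identity $[a_i,v_{i+1}]=[a_i,w_i]$ are in hand, the reduction to $\chi(M)=1$ is immediate.
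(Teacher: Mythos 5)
Your overall route is the intended one: the paper offers no written proof (it declares the corollary immediate from Proposition~\ref{basis}), and the propagated parameters $\lambda_i$ you construct are exactly what the paper later packages into the vector field $\xi_n$ of Section~2, whose components are $\bigl(\prod_{j=1}^{i-1}\tfrac{(a_{j-1},v_j,w_j)}{(a_j,v_j,w_j)}\bigr)[a_{i-1},v_i]$ and $\bigl(\prod_{j=1}^{i}\tfrac{(a_{j-1},v_j,w_j)}{(a_j,v_j,w_j)}\bigr)[a_i,w_i]$; your recursion $\lambda_i/(a_i,v_i,w_i)=\lambda_{i+1}/(a_i,v_{i+1},w_{i+1})$ and the telescoping to $\chi(M)=1$ are correct. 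There is, however, one genuine slip in your gluing step: the identification $W_i=V_{i+1}$ is false for the meshes considered here. The paper counts $3n$ distinct points $A_i,V_i,W_i$ (if $W_i=V_{i+1}$ held there would be only $2n$), and for the basic example of a quadrangular $(3{\times}3)$-mesh the edge face on $A_iA_{i+1}$ is a quadrangle with $W_i$ and $V_{i+1}$ as two \emph{different} outer vertices. Consequently neither $\dot w_i=\dot v_{i+1}$ nor the identity $[a_i,v_{i+1}]=[a_i,w_i]$ is available. The correct justification of the same recursion is the hinge argument: since $A_iA_{i+1}$ is fixed, the edge face can only rotate about that edge with some angular rate $\mu_i$, so $\dot w_i=\mu_i[a_i,w_i]$ and $\dot v_{i+1}=\mu_i[a_i,v_{i+1}]$ with a \emph{common} $\mu_i$; comparing with the two applications of Proposition~\ref{basis} gives $\mu_i=\lambda_i/(a_i,v_i,w_i)=\lambda_{i+1}/(a_i,v_{i+1},w_{i+1})$, which is your scalar relation. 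With that repair the rest of your argument, including the cyclic closure and the reindexing that produces $\chi(M)$, goes through verbatim.
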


\subsection{Geometric infinitesimal condition for a Kokotsakis mesh
with planar faces}

Consider points $A_1, \ldots, A_n$ in a plane. We denote this
plane by $\pi$. Let $l_i$ denote the intersection of the plane
containing the points $A_i$, $V_i$, and $W_i$ with the plane
$\pi$. We suppose that the lines $l_i$ and $l_{i+1}$ are not
parallel, and denote their intersection point by $B_i$.

For a $(3{\times} 3)$-mesh constructed with planar faces the
infinitesimal flexibility conditions are shown in the following
proposition. For more information see the work~\cite{PW} by
H.~Pottmann, J.~Wallner and the book~\cite{Sau} by R.~Sauer.

\begin{proposition}
Consider a mesh in general position. Then it is infinitesimally
flexible if and only if the lines $B_1B_3$, $A_2,A_3$, and
$A_4A_1$ intersect in one point $($i.e. the points $A_i$, $B_i$,
$i=1,2,3,4$ are aligned in a Desargues configuration$)$.
\end{proposition}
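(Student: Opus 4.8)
The plan is to reduce the geometric statement to the algebraic criterion $\chi(M)=1$ from Corollary~\ref{flex}, specialized to the planar $(3{\times}4)$ situation. First I would set up coordinates in the base plane $\pi$, writing $A_2A_3$, $A_4A_1$, and the auxiliary lines $l_i$ in terms of the edge vectors $a_i$. Since the faces are planar, each triple $A_i,V_i,W_i$ spans a plane whose trace on $\pi$ is exactly $l_i$; the key observation is that the mixed products $(a_{i-1},v_i,w_i)$ and $(a_i,v_i,w_i)$ appearing in $\chi(M)$ can be re-expressed, up to common factors, as quantities living entirely in $\pi$. Concretely, $(a_i,v_i,w_i)$ is the scalar triple product, and because $v_i,w_i$ span the plane through $l_i$, this triple product measures (up to a normalization independent of the index split between $a_{i-1}$ and $a_i$) the signed distance or the cross-ratio-type quantity of the line $l_i$ against the directions $a_{i-1}$ and $a_i$.

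Next I would translate $\chi(M)=1$ into a statement about the lines $l_i$ and the vertices $A_i$ in $\pi$. The ratio $\frac{(a_{i-1},v_i,w_i)}{(a_i,v_i,w_i)}$ should collapse to a ratio of the form $\frac{\sin\angle(l_i,a_{i-1})}{\sin\angle(l_i,a_i)}$ or an equivalent ratio of signed areas of triangles with a common apex at the intersection $B_{i-1}$ or $B_i$. The product over $i=1,2,3,4$ then becomes a product of such ratios around the quadrilateral $A_1A_2A_3A_4$, and I expect this to be recognizable as a Menelaus/Ceva-type alternating product. The condition that this product equals $1$ is precisely the collinearity/concurrency condition of a Desargues configuration, which is what must be shown.

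The concrete mechanism I would invoke is Desargues' theorem itself: the two triangles formed by the base vertices and the points $B_i$ are in perspective from a line exactly when they are in perspective from a point, and this perspectivity is governed by the multiplicative relation among the segment ratios cut on the sides. So the final step is to identify the product $\prod_{i}\frac{(a_{i-1},v_i,w_i)}{(a_i,v_i,w_i)}$ with the perspectivity invariant whose vanishing-from-$1$ forces the three lines $B_1B_3$, $A_2A_3$, $A_4A_1$ through a common point. I would verify that the combinatorial matching of indices (which $B_i$ sits on which $l_i$, which edges meet at which $A_i$) is consistent with the Desargues incidence pattern asserted in the statement.

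The main obstacle I anticipate is the bookkeeping in passing from the three-dimensional triple products to the two-dimensional projective invariants: one must check that the out-of-plane components of $v_i$ and $w_i$ cancel correctly in the ratio so that only the planar traces $l_i$ survive, and that the signs are handled so the product genuinely equals $1$ rather than $\pm1$. In other words, the hard part is not the projective geometry (Desargues is classical once the right cross-ratio product is in hand) but rather the clean algebraic identity $\frac{(a_{i-1},v_i,w_i)}{(a_i,v_i,w_i)} = \frac{\sigma_{i-1}}{\sigma_i}$ for suitable planar ratios $\sigma$, which decouples the spatial data from the planar configuration. Once that identity is established, the equivalence with $\chi(M)=1$ and hence with infinitesimal flexibility follows directly from Corollary~\ref{flex}.
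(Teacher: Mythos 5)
Your overall strategy is the right one and is essentially the route the paper takes: the paper does not prove this proposition in isolation (it cites Pottmann--Wallner and Sauer), but derives it as the $n=4$ case of its general machinery, namely the identity
$$
\prod_{i=1}^{n}\frac{1-t_i}{t_i}
=(-1)^n\prod_{i=1}^{n}\frac{(v_i,w_i,a_i)}{(v_i,w_i,a_{i-1})}
$$
from the proof of Theorem~\ref{relations} (which converts the spatial triple products into planar data exactly as you propose, via signed distances from $A_{i+1}$ to the line $B_{i-1}B_i$ and the law of sines in the triangles $A_iA_{i+1}B_i$), combined with Corollary~\ref{flex} and with Geometric condition~I (Proposition~\ref{geom}), which for $n=4$ is precisely the concurrency of $B_1B_3$, $A_2A_3$, $A_4A_1$. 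So your steps (a) reduce to $\chi(M)=1$, (b) collapse each ratio of triple products to a planar sine/area ratio attached to $l_i$ and the $B_i$, (c) recognize a Ceva--Menelaus product, are all in agreement with the paper.

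There are, however, two genuine gaps. First, the identity you single out as ``the hard part'' --- $\frac{(a_{i-1},v_i,w_i)}{(a_i,v_i,w_i)}=\sigma_{i-1}/\sigma_i$ for planar quantities $\sigma$ --- is only described, not established; the paper's mechanism is concrete (both triple products share the factor $h_{A_{i+1},B_{i-1}B_i}$ because $v_i,w_i$ span the plane through $l_i$, so the ratio reduces to $\sgn$-corrected ratios of $|A_iA_{i+1}|/\sin(A_{i+1}A_iB_i)$ type, which then telescope around the polygon), and without it the proof does not exist. Second, and more seriously, your final step invokes Desargues' theorem (perspective from a point iff from a line) as the engine that converts ``the alternating product equals $1$'' into ``the three lines concur.'' That is not the right tool: the word ``Desargues configuration'' in the statement only describes the incidence pattern of the eight points; the actual implication needed is a Ceva-type statement, and the paper proves it (Proposition~\ref{geom}) by applying Ceva's theorem in a triangle formed by the points $B_i$ together with an induction on $n$. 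You would also need to track that for $n=4$ the flexibility condition forces the product to be $+1$ (Geometric condition~I) rather than $-1$ (Geometric condition~II, the Menelaus-type collinearity), which is exactly the sign issue you flag but do not resolve.
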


Let us generalize this statement for an arbitrary Kokotsakis mesh
with planar polygons as faces. For $i=1,\ldots,n$ we define real
numbers $t_i$ from the following affine equations:
$$
A_i=t_iB_{i-1}+(1-t_i)B_i.
$$

\begin{theorem}\label{relations}
The closed broken line $A_1B_1A_2B_2\ldots A_nB_nA_1$ satisfies
the generalized Ceva-Menelaus condition, i.e.
$$
\prod\limits_{i=1}^{n}\frac{1-t_i}{t_i}=(-1)^n.
$$
%$$\hbox{where}\quad\sigma(B_{i-1}A_iBi)=
%
%\left\{
%\begin{array}{cl}
%1& \hbox{if $A_i$ is contained in the segment $B_{i-1}B_i$}\\
%-1& \hbox{otherwise}
%\end{array}
%\right.
%.$$
\end{theorem}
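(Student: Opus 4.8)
The plan is to parametrise the three collinear points on each line $l_i$ and then reduce the product of ratios to the quantity $\chi(M)$ of Corollary~\ref{flex}. Since $A_i$, $B_{i-1}$ and $B_i$ all lie on $l_i$, I would fix a direction vector $u_i$ of $l_i$ and write $B_{i-1}=A_i+p_iu_i$ and $B_i=A_i+q_iu_i$ for suitable scalars $p_i,q_i$. Substituting these into the defining relation $A_i=t_iB_{i-1}+(1-t_i)B_i$ forces $t_ip_i+(1-t_i)q_i=0$, which after a one-line computation yields $\frac{1-t_i}{t_i}=-\frac{p_i}{q_i}$, and therefore $\prod_{i=1}^n\frac{1-t_i}{t_i}=(-1)^n\prod_{i=1}^n\frac{p_i}{q_i}$. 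The whole problem is thus reduced to proving $\prod p_i=\prod q_i$.

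Next I would exploit that $B_i$ lies on both $l_i$ and $l_{i+1}$. Equating the two descriptions $B_i=A_i+q_iu_i=A_{i+1}+p_{i+1}u_{i+1}$ gives the planar vector relation $q_iu_i-p_{i+1}u_{i+1}=a_i$ for every $i$. Taking the scalar two-dimensional cross product of this identity with $u_{i+1}$ and with $u_i$ separately and solving by Cramer's rule gives $q_i=\frac{[a_i,u_{i+1}]}{[u_i,u_{i+1}]}$ and $p_{i+1}=\frac{[a_i,u_i]}{[u_i,u_{i+1}]}$, where $[\cdot,\cdot]$ now denotes the determinant of two planar vectors. Forming the cyclic products, the denominators $[u_i,u_{i+1}]$ appear identically in both, so they cancel and leave $\frac{\prod_i p_i}{\prod_i q_i}=\frac{\prod_i[a_i,u_i]}{\prod_i[a_i,u_{i+1}]}$.

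The crux is to identify this last ratio with $\chi(M)^{-1}$. Here I would translate the spatial triple products in $\chi(M)$ into planar determinants. Writing $N_i=[v_i,w_i]$ for the normal of the face plane through $A_i,V_i,W_i$ and using that $a_{i-1},a_i\in\pi$, the triple products become $(a_{i-1},v_i,w_i)=\langle a_{i-1},N_i\rangle$ and $(a_i,v_i,w_i)=\langle a_i,N_i\rangle$; only the orthogonal projection of $N_i$ onto $\pi$ contributes, and this projection is orthogonal to the direction $u_i$ of $l_i$. Hence $\langle a_{i-1},N_i\rangle=c_i[a_{i-1},u_i]$ and $\langle a_i,N_i\rangle=c_i[a_i,u_i]$ with one and the same factor $c_i$ depending only on the line $l_i$. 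The factors $c_i$ therefore cancel in the quotient, and after reindexing the numerator one obtains $\chi(M)=\frac{\prod_i[a_i,u_{i+1}]}{\prod_i[a_i,u_i]}$, so that $\frac{\prod_i p_i}{\prod_i q_i}=\chi(M)^{-1}$.

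Combining the three steps gives $\prod_{i=1}^n\frac{1-t_i}{t_i}=(-1)^n\chi(M)^{-1}$. Since the mesh is infinitesimally flexible, Corollary~\ref{flex} yields $\chi(M)=1$, and the Ceva--Menelaus identity $\prod_{i=1}^n\frac{1-t_i}{t_i}=(-1)^n$ follows. I expect the cancellation of the proportionality factors $c_i$ in the third step to be the main obstacle: one must check carefully that the same constant multiplies both $[a_{i-1},u_i]$ and $[a_i,u_i]$, i.e.\ that the triple product really sees only the component of $N_i$ along the fixed in-plane normal of $l_i$. The non-degeneracy hypotheses (the lines $l_i,l_{i+1}$ not parallel, and general position) guarantee that all denominators $[u_i,u_{i+1}]$ and $q_i$ are nonzero.
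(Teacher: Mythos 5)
Your proof is correct, and at the level of strategy it coincides with the paper's: both arguments reduce the cyclic product to the identity $\prod_{i=1}^n\frac{1-t_i}{t_i}=(-1)^n\prod_{i=1}^n\frac{(v_i,w_i,a_i)}{(v_i,w_i,a_{i-1})}=(-1)^n\chi(M)^{-1}$ and then conclude from $\chi(M)=1$. Where you differ is in how that identity is obtained. The paper computes each ratio $(v_i,w_i,a_i)/(v_i,w_i,a_{i-1})$ metrically, as a quotient of signed distances from $A_{i+1}$ and $A_{i-1}$ to the line $l_i$, converts these into the ratios $|B_{i-1}A_i|/|A_iB_i|$ via the law of sines in the triangles $A_iA_{i+1}B_i$, and tracks orientations with explicit $\sgn$ factors. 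You instead parametrize $B_{i-1},B_i$ affinely along $l_i$ and solve for the parameters $p_i,q_i$ by Cramer's rule with planar determinants, so the signs are carried automatically and the only nontrivial step is reducing the spatial triple products to planar determinants (your common factor $c_i$, which you correctly single out and which does work: $[v_i,w_i]$ minus its component normal to $\pi$ is a vector of $\pi$ orthogonal to $u_i$, hence pairs with any $a_k\subset\pi$ through one and the same multiple of the in-plane normal of $l_i$). This replaces the paper's distance-to-plane versus distance-to-line comparison and avoids the case analysis hidden in the $\sgn$'s. One further point in your favour: the last equality genuinely needs the mesh to be infinitesimally flexible, since $\prod_i(v_i,w_i,a_i)/(v_i,w_i,a_{i-1})$ does not telescope and equals $1$ precisely when $\chi(M)=1$ (Corollary~\ref{flex}); you invoke this hypothesis explicitly, whereas the paper's final displayed equality uses it silently.
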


\begin{remark}
If there is one point $B_i$ ``at infinity'' one should replace
$$
\frac{1-t_i}{t_i}\cdot \frac{1-t_{i+1}}{t_{i+1}} \qquad \hbox{by} \qquad
(-1)^\varepsilon \frac{1-t_{i+1}}{t_i}
$$
in the product. Here $t_i$ and $1{-}t_{i+1}$ denotes the absolute values
of the vectors $A_i{-}B_{i-1}$ and $B_{i+1}{-}A_{i+1}$ respectively. We take $\varepsilon=0$ if
the directions of these two vectors are opposite and $\varepsilon=1$ if they coincide.

We leave for the reader the other cases of some of the vertices $B_i$ being ``at infinity'' as
an exercise.
\end{remark}

\begin{proof}
Choose some orientation on the plane~$\pi$. For an ordered couple
$e_1,e_2$ of vectors denote
$$
\sgn(e_1,e_2)=\left\{
\begin{array}{cl}
1& \hbox{if $(e_1,e_2)$ is a positively oriented basis of the plane}\\
0& \hbox{if $e_1$ and $e_2$ are collinear}\\
-1& \hbox{otherwise}
\end{array}
\right. .
$$

Denote by $h_{P,QR}$ the (non-oriented) distance from the point
$P$ to the line $QR$. Then for any $i\in\{1,\ldots,n\}$ we have:
$$
\begin{array}{c}
\displaystyle \frac{(v_i,w_i,a_i)}{(v_i,w_i,a_{i-1})}=
\frac{\sgn(B_{i-1}B_i,A_iA_{i+1})h_{A_{i+1},B_{i-1}B_{i}}}{\sgn(B_{i-1}B_i,A_{i-1}A_{i})h_{A_{i+1},B_{i-1}B_{i}}}=\\
\displaystyle
\frac{\sgn(B_{i-1}B_i,A_iA_{i+1})}{\sgn(B_{i-1}B_i,A_{i-1}A_{i})}\cdot
\frac{|A_iA_{i+1}|/\sin(A_{i+1}A_iB_i)}{|A_{i-1}A_{i}|/\sin(A_{i-1}A_iB_{i-1})}.
\end{array}
$$

Therefore,
$$
\begin{array}{c}
\displaystyle \prod\limits_{i=1}^{n}\frac{1-t_i}{t_i}=
\prod\limits_{i=1}^{n}
\frac{\sgn(B_{i-2}B_{i-1},A_{i-1}A_{i})|B_{i-1}A_{i}|}{-\sgn(B_{i}B_{i+1},A_iA_{i+1})|A_iB_i|}=\\
\displaystyle
(-1)^n \cdot \prod\limits_{i=1}^{n} \left(
\frac{\sgn(B_{i-1}B_i,A_iA_{i+1})}{\sgn(B_{i-1}B_i,A_{i-1}A_{i})}\cdot
\frac{|A_iA_{i+1}|}{|A_{i-1}A_{i}|}\cdot
\frac{\sin(A_{i-1}A_iB_{i-1})}{\sin(A_{i+1}A_iB_i)} \right)=\\
\displaystyle (-1)^n \cdot
\prod\limits_{i=1}^{n}\frac{(v_i,w_i,a_i)}{(v_i,w_i,a_{i-1})}=
(-1)^n.
\end{array}
$$
The second equality holds by the law of sines for the triangles
$A_iA_{i+1}B_i$ for $i=1,\ldots,n$.
\end{proof}

Denote by $[A,B;C,D]$ the point of intersection of the
lines passing through $A,B$ and $C,D$ respectively.

Denote $P_0=B_1$. Let $O_i=[B_n,A_{i+1};B_{i+1},P_{i-1}]$, and
$P_i=[B_{i+1},B_n;B_{i},O_{i}]$, for $i=1,\ldots, n{-}2$.

\begin{proposition}\label{geom}{\bf Geometric condition I.}
The condition
$$
\prod\limits_{i=1}^{n}\frac{1-t_i}{t_i}=1
$$
is equivalent to the condition that the points $A_n$ and $P_{n-2}$
coincide $($see Figure~\ref{pentagon}.I$)$.
\end{proposition}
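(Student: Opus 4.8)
The plan is to read the two auxiliary constructions defining $O_i$ and $P_i$ as a chain of central projections and to turn the product $\prod(1-t_i)/t_i$ into the multiplier of their composition. For each $i$ the passage $P_{i-1}\mapsto P_i$ factors as two perspectivities attached to the pencil of lines through $B_n$: first the projection with centre $B_{i+1}$ carrying $P_{i-1}$ to $O_i=[B_n,A_{i+1};B_{i+1},P_{i-1}]$, then the projection with centre $B_i$ carrying $O_i$ to $P_i=[B_{i+1},B_n;B_i,O_i]$. Writing $g_i$ for the resulting projective map from the line $B_nB_i$ to the line $B_nB_{i+1}$, we have $P_i=g_i(P_{i-1})$. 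Before any computation I would record two special values: since all these lines pass through $B_n$ and since $B_{i-1},A_i,B_i$ lie on $l_i$, the map $g_i$ fixes $B_n$ and sends $B_i\mapsto A_{i+1}\mapsto B_{i+1}$.

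First I would coordinatise each line $B_nB_{i+1}$ by the affine parameter $x$ with $x=0$ at $B_n$ and $x=1$ at $B_{i+1}$; these parameters are compatible, in the sense that the target coordinate of $g_i$ is the source coordinate of $g_{i+1}$. Solving the two collinearity conditions with $A_{i+1}=t_{i+1}B_i+(1-t_{i+1})B_{i+1}$ gives
$$
g_i(x)=\frac{(1-t_{i+1})\,x}{(1-2t_{i+1})\,x+t_{i+1}},
$$
a M\"obius map fixing $0$ and $1$ whose multiplier at $B_n$ equals $\tfrac{1-t_{i+1}}{t_{i+1}}$. The decisive step is to linearise the whole chain simultaneously: in the coordinate $u=\tfrac{x}{1-x}$, which sends $B_n\mapsto0$ and $B_{i+1}\mapsto\infty$, every $g_i$ becomes multiplication by $\tfrac{1-t_{i+1}}{t_{i+1}}$, so the composition $g_{n-2}\circ\cdots\circ g_1$ is multiplication by $\prod_{i=2}^{n-1}\tfrac{1-t_i}{t_i}$.

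It then remains to match the two ends. On $l_1=B_nB_1$ the starting datum $P_0$ has $u$-value $\tfrac{1-t_1}{t_1}$ (namely the $u$-value of $A_1$), whence $P_{n-2}$ acquires $u$-value $\tfrac{1-t_1}{t_1}\prod_{i=2}^{n-1}\tfrac{1-t_i}{t_i}=\prod_{i=1}^{n-1}\tfrac{1-t_i}{t_i}$. On the target line $l_n=B_nB_{n-1}$ of the last map the point $A_n$ has $u$-value $\tfrac{t_n}{1-t_n}$, the endpoints $B_{n-1},B_n$ now appearing in the opposite roles. Therefore $A_n=P_{n-2}$ is equivalent to $\prod_{i=1}^{n-1}\tfrac{1-t_i}{t_i}=\tfrac{t_n}{1-t_n}$, that is, to $\prod_{i=1}^{n}\tfrac{1-t_i}{t_i}=1$, which is exactly the asserted condition.

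I expect the genuine work to be concentrated in two places. The first is the elementary but careful identification of the two perspectivities and the resulting multiplier $\tfrac{1-t_{i+1}}{t_{i+1}}$, together with the end bookkeeping: one must fix unambiguously the value carried by $P_0$ on $l_1$ and by $A_n$ on $l_n$, the orientation reversal on $l_n$ being precisely what supplies the last factor $\tfrac{1-t_n}{t_n}$ and closes the product. The second is the treatment of degenerate positions — some $B_i$, or an intermediate $O_i$ or $P_i$, at infinity — which I would handle by reading $u$ as a coordinate on the projective line and each $g_i$ as an element of $\mathrm{PGL}_2$; this keeps the multiplier computation uniform and reduces those cases to the same multiplicative statement, in the spirit of the Remark following Theorem~\ref{relations}.
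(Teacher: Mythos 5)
Your argument is correct, and it takes a genuinely different route from the paper's. The paper proves the proposition by induction on $n$: Ceva's theorem together with Theorem~\ref{relations} settles the base case $n=3$, and the induction step applies Ceva's theorem in the triangle $B_nB_1B_2$ to replace the two factors coming from $A_1$ and $A_2$ by a single factor coming from $P_1$, reducing the statement to the shorter broken line $P_1B_2A_3B_3\ldots A_nB_nP_1$. You instead read the recursion $P_{i-1}\mapsto O_i\mapsto P_i$ as a composition of two perspectivities centred at $B_{i+1}$ and $B_i$, observe that the resulting map $g_i$ of lines through $B_n$ fixes $B_n$ and carries $B_i$ to $B_{i+1}$, and compute its multiplier $\tfrac{1-t_{i+1}}{t_{i+1}}$ explicitly; the substitution $u=x/(1-x)$ linearises the whole chain into a single multiplication, and matching $u(P_{n-2})$ against $u(A_n)=t_n/(1-t_n)$ yields exactly $\prod_{i=1}^{n}\tfrac{1-t_i}{t_i}=1$. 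I checked your formula for $g_i$ in coordinates and it is right; your approach dispenses with the induction altogether, and the $\mathrm{PGL}_2$ reading handles the points-at-infinity cases uniformly, which the paper leaves to the reader. One point you should make explicit rather than smuggle in through a $u$-value: you start the chain at $A_1$, i.e., you take $P_0=A_1$, whereas the paper literally writes $P_0=B_1$. Your reading is the correct one --- with $P_0=B_1$ the line $B_2P_0$ is $l_2$, forcing $O_1=A_2$ and $P_i=B_{i+1}$ for every $i$, so the construction degenerates --- and it is the reading that reproduces the stated Ceva configuration for $n=3$ and that the paper's own induction step actually uses; but since you are deviating from the printed definition, say so.
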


\begin{figure}
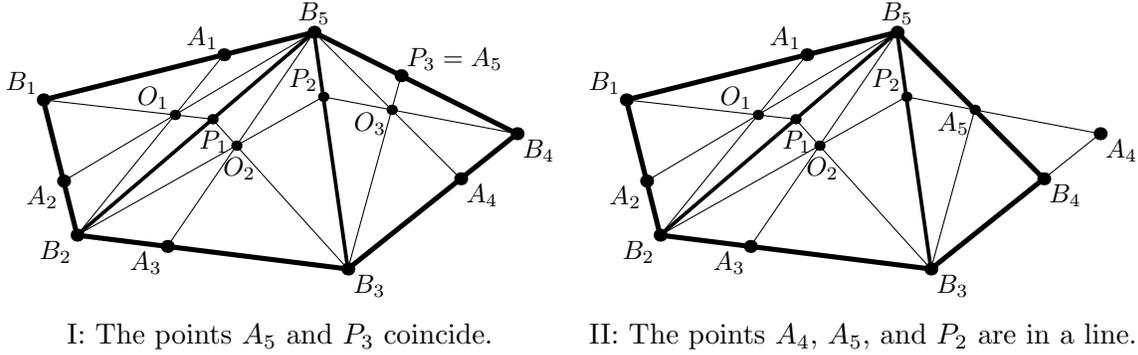

$$\epsfbox{pentagon.1} \quad \epsfbox{pentagon.2}$$
\caption{Geometric conditions I and II for a pentagon.}
\label{pentagon}
\end{figure}

In the case $n=3$ we have: {\it the lines $B_1A_3$, $B_2A_1$, and
$B_3A_2$ intersect at one point $($Ceva configuration$)$}.  In the
case $n=4$ we have: {\it the lines $B_1B_3$, $A_2,A_3$, and
$A_4A_1$ intersect at one point $($Desargues configuration$)$}.

\begin{proof}
We prove the proposition by induction on $n$.

By Ceva's theorem and Theorem~\ref{relations} the statement holds
for all configurations with $n=3$. For pairwise distinct collinear
points $A$, $B$, $C$ denote
$$
\sigma(ABC)=\left\{
\begin{array}{cl}
1& \hbox{if the point $B$ is in the segment $[A,C]$}\\
-1& \hbox{otherwise}
\end{array}
\right. .
$$

Suppose the statement holds for all configurations with $n=4$. By
Ceva's Theorem in the triangle $B_nB_1B_2$ we have:
$$
\prod\limits_{i=1}^{n}\sigma(B_{i-1}A_iBi)\frac{B_{i-1}A_i}{A_iB_i}=
\sigma(B_{n}P_1B_2)\frac{B_{n}P_1}{P_1B_2}
\prod\limits_{i=3}^{n}\sigma(B_{i-1}A_iB_i)\frac{B_{i-1}A_i}{A_iB_i}.
$$
Then by induction the statement is true for the broken line
$P_1B_2A_3B_3\ldots A_nB_nP_1$.
\end{proof}

\begin{proposition}\label{geom2}{\bf Geometric condition II.}
The condition
$$
\prod\limits_{i=1}^{n}\frac{1-t_i}{t_i}=-1
$$
is equivalent to the condition that the points $P_{n-3}$, $A_{n-1}$,
and $A_n$ are collinear $($see Figure~\ref{pentagon}.II$)$.
\end{proposition}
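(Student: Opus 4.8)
The plan is to reuse, essentially verbatim, the inductive reduction from the proof of Proposition~\ref{geom}, changing only the final step: where Geometric condition~I reads off a Ceva (concurrency) configuration, Geometric condition~II will read off a Menelaus (collinearity) configuration. The reduction itself has nothing to do with the target value $\pm 1$; it merely preserves the signed product and removes one vertex at a time, so it can be imported unchanged.

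Concretely, writing the product as
$$
\prod_{i=1}^{n}\frac{1-t_i}{t_i}=\prod_{i=1}^{n}\sigma(B_{i-1}A_iB_i)\frac{B_{i-1}A_i}{A_iB_i},
$$
I would first note that Ceva's theorem in the triangle $B_nB_1B_2$ collapses the two factors $i=1,2$ into a single factor attached to $P_1$,
$$
\sigma(B_nA_1B_1)\frac{B_nA_1}{A_1B_1}\cdot\sigma(B_1A_2B_2)\frac{B_1A_2}{A_2B_2}=\sigma(B_nP_1B_2)\frac{B_nP_1}{P_1B_2},
$$
exactly as in Proposition~\ref{geom}. Hence the signed product for $A_1B_1\ldots A_nB_nA_1$ equals that for the shorter broken line $P_1B_2A_3B_3\ldots A_nB_nP_1$. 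Iterating the construction of the $O_i$ and $P_i$ a total of $n-3$ times cuts off $B_1,\ldots,B_{n-3}$ in turn and leaves the triangle $B_{n-2}B_{n-1}B_n$ carrying the three points $P_{n-3}$, $A_{n-1}$, $A_n$ on its three (extended) sides, with
$$
\prod_{i=1}^{n}\frac{1-t_i}{t_i}=\sigma(B_nP_{n-3}B_{n-2})\frac{B_nP_{n-3}}{P_{n-3}B_{n-2}}\cdot\sigma(B_{n-2}A_{n-1}B_{n-1})\frac{B_{n-2}A_{n-1}}{A_{n-1}B_{n-1}}\cdot\sigma(B_{n-1}A_nB_n)\frac{B_{n-1}A_n}{A_nB_n}.
$$
At this terminal triangle I would invoke Menelaus' theorem rather than Ceva's: the points $P_{n-3},A_{n-1},A_n$ are collinear if and only if this signed product equals $-1$. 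Combined with the equality of products guaranteed by the reduction, this yields $\prod_{i=1}^{n}\frac{1-t_i}{t_i}=-1$ if and only if $P_{n-3},A_{n-1},A_n$ are collinear, which is the assertion.

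The part I expect to require the most care is purely the sign bookkeeping, not the geometry: I must check that each Ceva reduction transports the orientation factors $\sigma$ correctly, so that the \emph{signed} product (and not merely its absolute value) is preserved, and that the terminal Menelaus criterion is normalized to $-1$, i.e.\ to the genuine transversal case, rather than to $+1$. Two small edge cases deserve a separate line. First, for $n=3$ the point $P_{n-3}=P_0=B_1$ is a vertex of the triangle, so the induction should be based at $n=4$ (a single reduction to the triangle $B_2B_3B_4$), with the degenerate $n=3$ instance treated by Menelaus applied directly to $A_1,A_2,A_3$ in $B_1B_2B_3$; this parallels the way the $n=3$ Ceva case opens the proof of Proposition~\ref{geom}. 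Second, configurations in which some $B_i$ lies at infinity are absorbed by the same convention as in the remark following Theorem~\ref{relations}.
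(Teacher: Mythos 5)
Your proposal is correct and follows essentially the same route as the paper: the base case is Menelaus' theorem combined with Theorem~\ref{relations}, and the induction step is the Ceva reduction in the triangle $B_nB_1B_2$ imported unchanged from the proof of Proposition~\ref{geom}. Your extra care about the sign conventions and the degenerate $n=3$ case (where $P_0=B_1$) only makes explicit what the paper leaves to the reader.
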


In the case $n=3$ we have: {\it the points $A_{1}$, $A_{2}$, and
$A_3$ are contained in one line} $($Menelaus configuration$)$.

\begin{proof}
We prove the proposition by induction on $n$. By Menelaus' theorem
and Theorem~\ref{relations} the statement holds for all
configurations with $n=3$. The induction step is the same as in
the proof of Proposition~\ref{geom2}, so we omit it here.
\end{proof}

\begin{remark}
The condition of Corollary~\ref{flex} can be rewritten as
Geometric condition I in the case of even $n$, and as Geometric
condition II in the case of odd $n$.
\end{remark}

\begin{corollary}
Any Kokotsakis mesh with a triangle in the middle, in general
position, is not infinitesimal flexible $($and therefore it is not
flexible$)$. \qed
\end{corollary}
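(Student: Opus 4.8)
The plan is to specialize the general Ceva--Menelaus machinery to the case $n=3$ and read off the infinitesimal flexibility condition geometrically. First I would invoke the remark preceding the corollary: for odd $n$ the condition $\chi(M)=1$ of Corollary~\ref{flex} is equivalent to Geometric condition~II, that is, to $\prod_{i=1}^{n}\frac{1-t_i}{t_i}=-1$. A triangle in the middle means exactly $n=3$, which is odd, so such a mesh is infinitesimally flexible if and only if Geometric condition~II holds.

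Next I would unpack Geometric condition~II for $n=3$. In this case Proposition~\ref{geom2} specializes to the classical Menelaus configuration: $\prod_{i=1}^{3}\frac{1-t_i}{t_i}=-1$ holds if and only if the three points $A_1$, $A_2$, $A_3$ lie on a single line. The crucial observation is that these are not auxiliary points but the vertices of the middle face itself, so infinitesimal flexibility has been turned into a statement purely about that triangle.

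The conclusion is then forced. For a mesh with a triangle in the middle the vertices $A_1$, $A_2$, $A_3$ are, in general position, not collinear, so the Menelaus condition fails; by the equivalence above $\chi(M)\neq 1$, and Corollary~\ref{flex} shows the mesh is not infinitesimally flexible. No separate argument for the stronger claim is needed, since any continuous flex differentiates at the initial configuration to a nontrivial infinitesimal flex, so a mesh that is not infinitesimally flexible cannot be flexible. I do not expect a genuine obstacle here; the only point requiring care is the reading of ``general position,'' which must exclude the degenerate collinear triangle and must ensure that the auxiliary lines $l_i$, $l_{i+1}$ are not parallel, so that the points $B_i$ and the ratios $t_i$ entering Proposition~\ref{geom2} are actually defined.
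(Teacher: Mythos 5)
Your proposal is correct and follows exactly the route the paper intends: the corollary is stated with an immediate \qed because it follows from the preceding remark (for odd $n$ the infinitesimal flexibility condition $\chi(M)=1$ is Geometric condition~II) together with the $n=3$ specialization of Proposition~\ref{geom2}, namely the Menelaus configuration requiring $A_1$, $A_2$, $A_3$ to be collinear, which general position excludes. Your added care about the definedness of the $B_i$ and $t_i$ and the passage from infinitesimal rigidity to rigidity is sensible but does not change the argument.
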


In particular we have the following conclusion. Consider a closed
mesh in general position (i.e. there are no 3 vertices lying in
one line) whose edge-graph is 4-valent. Then the mesh is not
flexible if at least one of its faces is a triangle. Actually this
means that the Euler characteristic of the surface is
non-positive. So a flexible mesh in general position is not
homeomorphic to the sphere or to the projective plane.

%We conclude this section with the following general remark.
%\begin{remark}
%For the simplicity reasons in Section~1 we consider only the case
%of meshes in $\r^3$, nevertheless Corollary~\ref{triangle} have
%very similar formulations in $\r^n$ for $n\ge 4$ (vector products
%should be replaced by the vectors of angular rotation).
%????CHECK?????
% It is not true, since the angles are nolonger in 3d => more dimensions of freedom
%\end{remark}

\section{Flexibility conditions}\label{Sec2}

\subsection{Semialgebraicity of the set of flexible Kokotsakis meshes}

First we define a configuration space of all Kokotsakis meshes
whose central face is an $n$-gon. Any Kokotsakis mesh contains
$3n$ points. So the configuration space of all possible vertex
configurations is $(\r^3)^{3n}$. We will not consider
configurations for which some face has three consequent vertices
in a line, or which has faces with coinciding vertices. Denote
this set by $\Sigma_n$. Finally, the {\it configuration space of
all Kokotsakis meshes} is by definition
$(\r^3)^{3n}\setminus\Sigma_n$, denote it by $\Theta_n$.

Notice that we are not restricted to the case of planar faces in
this section. Consider the vector field:
$$
\begin{array}{lll}
\dot{v_i}&=&\displaystyle \left(\prod\limits_{j=1}^{i-1}
\frac{(a_{j-1},v_j,w_j)}{(a_j,v_j,w_j)}\right)[a_{i-1},v_i]\quad \hbox{and } \\
\dot{w_i}&=&\displaystyle \left(\prod\limits_{j=1}^{i}
\frac{(a_{j-1},v_j,w_j)}{(a_j,v_j,w_j)}\right)[a_i,w_i]\quad \hbox{for } i=1,\ldots, n.\\
\end{array}
$$
and denote it by $\xi_n$. This vector field is clearly everywhere
defined in $\Theta_n$.

Further when, in this section,  we speak of a motion we assume that
the special face in the middle is fixed while the others are
moving.

\begin{theorem}
Consider a flexible mesh in $\Theta_n$. The motion trajectory of
the mesh in some small neighborhood is contained in some integral
curve of the vector field $\xi_n$.
\end{theorem}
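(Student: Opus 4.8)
The plan is to show that along any motion of a flexible mesh the velocity field is, at every instant, proportional to $\xi_n$ evaluated at the current configuration; the trajectory is then a reparametrisation of an integral curve of $\xi_n$ and hence contained in one. Throughout I keep the central face $A_1\ldots A_n$ fixed, so that the only moving data are the vectors $v_i,w_i$, and I abbreviate the partial products by
$$
\mu_i=\prod_{j=1}^{i}\frac{(a_{j-1},v_j,w_j)}{(a_j,v_j,w_j)},\qquad \mu_0=1,
$$
so that $\xi_n$ reads $\dot v_i=\mu_{i-1}[a_{i-1},v_i]$ and $\dot w_i=\mu_i[a_i,w_i]$.

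First I would localise at each vertex. Fixing a time $t$ of the motion, the velocity $(\dot v_i,\dot w_i)_i$ is an infinitesimal motion of the mesh, so at the vertex $A_i$ it is an infinitesimal motion of the $(2\times 2)$-mesh with $b=-a_{i-1}$ and $c=a_i$. Proposition~\ref{basis} then yields a scalar $\lambda_i$ with
$$
\dot v_i=\lambda_i\frac{[a_{i-1},v_i]}{(a_{i-1},v_i,w_i)},\qquad
\dot w_i=\lambda_i\frac{[a_i,w_i]}{(a_i,v_i,w_i)}
$$
(the two sign changes coming from $b=-a_{i-1}$ cancel). This already forces the velocity to have the same shape as $\xi_n$ at each single vertex; what remains is to pin down the constants $\lambda_i$ relative to one another.

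The coupling between neighbouring vertices comes from the band faces. The face carrying the angle $\varphi_i$ at $A_i$ and the face carrying the angle $\beta_{i+1}$ at $A_{i+1}$ share the edge $A_iA_{i+1}$, hence are one and the same face, containing both $W_i$ and $V_{i+1}$. Since this face is rigid and its edge $A_iA_{i+1}$ is fixed, its infinitesimal motion is a single rotation about the line $A_iA_{i+1}$, i.e. about the direction $a_i$, with one scalar rate $\nu_i$; evaluating it at $W_i$ and at $V_{i+1}$ gives $\dot w_i=\nu_i[a_i,w_i]$ and $\dot v_{i+1}=\nu_i[a_i,v_{i+1}]$ with the \emph{same} $\nu_i$. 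Comparing with the expressions above produces the recursion
$$
\frac{\lambda_i}{(a_i,v_i,w_i)}=\nu_i=\frac{\lambda_{i+1}}{(a_i,v_{i+1},w_{i+1})},
$$
which determines each $\lambda_{i+1}$ from $\lambda_i$. The coefficients of $\xi_n$, for which $\lambda_i=\mu_{i-1}(a_{i-1},v_i,w_i)=\mu_i(a_i,v_i,w_i)$ and hence $\nu_i=\mu_i$, satisfy the very same first–order recursion; consequently the two solutions are proportional and $(\dot v_i,\dot w_i)=c\,\xi_n$ for a single scalar $c$ (depending on $t$ but not on $i$). The closing-up of the recursion around the whole cycle is exactly the condition $\chi(M)=1$ of Corollary~\ref{flex}, which holds because the mesh is flexible.

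The main obstacle is this gluing step: one must be certain that the shared band face really is a single rigid body whose two relevant corners are $W_i$ and $V_{i+1}$, and that a rigid body with a fixed edge admits only rotations about that edge, so that a \emph{single} rate $\nu_i$ governs both $\dot w_i$ and $\dot v_{i+1}$. Granting this, the recursion leaves the space of infinitesimal motions one–dimensional and collinear with $\xi_n$, which is nowhere zero on $\Theta_n$ since the triple products and the brackets $[a_{i-1},v_i]$ do not vanish in general position. Finally, as $t$ varies the velocity of the motion equals $c(t)\,\xi_n$; reparametrising time to absorb the scalar $c(t)$ turns the trajectory into a solution of $\dot x=\xi_n(x)$, so the motion trajectory is contained in an integral curve of $\xi_n$, as claimed.
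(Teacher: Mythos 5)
Your proof is correct and follows essentially the same route as the paper, which simply invokes Proposition~\ref{basis} to conclude that the velocity of any motion is proportional to $\xi_n$ and hence that the trajectory lies in an integral curve. The only difference is one of detail: you make explicit the gluing step (the shared band face along $A_iA_{i+1}$ forcing a common rotation rate $\nu_i$ for $\dot w_i$ and $\dot v_{i+1}$, and the resulting recursion for the $\lambda_i$ closing up via $\chi(M)=1$), which the paper's two-sentence proof leaves implicit.
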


\begin{proof}
By Proposition~\ref{basis} a tangent vector for the trajectory of
a flexible mesh $M$ in $\Theta_n$ is proportional to the vector
$\xi(M)$. This implies the statement of the theorem.
\end{proof}

Denote the set of all flexible meshes by $F_{\Theta_n}$. Let us
consider an infinite sequence $f_i$ of algebraic functions on
$(\r^3)^{3n}$, defined inductively:
\\
{\it i$)$.} Take $f_1$ to be the numerator of the rational
function $\chi(M)-1$.
\\
{\it ii$)$.} Take $f_n$ to be the numerator of the rational
function $\langle\grad(f_{n-1}),\xi_n\rangle$.

\begin{remark}
We ignore denominators since they are nonzero algebraic functions
on $\Theta_n$.
\end{remark}

\begin{theorem}
The closure of $F_{\Theta_n}$ in $\Theta_n$ is a semialgebraic set
coinciding with
$$
\bigcap\limits_{i=1}^{+\infty} \{f_i=0\}\cap \Theta_n.
$$
\end{theorem}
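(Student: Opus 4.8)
The plan is to identify the right-hand set with the locus of meshes whose $\xi_n$-integral curve remains inside the infinitesimal-flexibility hypersurface $\{\chi=1\}$, and to detect this membership through the iterated Lie derivatives $f_i$. I will use throughout that $\xi_n$ is a \emph{nonvanishing} real-analytic vector field on $\Theta_n$: its denominators are the triple products $(a_j,v_j,w_j)$, nonzero on $\Theta_n$ by the stated convention, while each $\dot v_i$ is a nonzero scalar multiple of $[a_{i-1},v_i]$, which cannot vanish because $A_{i-1},A_i,V_i$ are never collinear on $\Theta_n$. Consequently a unique real-analytic integral curve $\gamma_M$ with $\gamma_M(0)=M$ and $\dot\gamma_M=\xi_n\circ\gamma_M$ exists near every $M$. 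The structural claims about $Z:=\bigcap_{i=1}^{\infty}\{f_i=0\}\cap\Theta_n$ come for free: the algebraic sets $\{f_1=\cdots=f_k=0\}$ form a descending chain that stabilizes by Noetherianity of the polynomial ring, so $\bigcap_i\{f_i=0\}$ is a single algebraic set and $Z$ is its trace on $\Theta_n$, hence semialgebraic and relatively closed.

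The analytic core is one lemma: for $M\in\Theta_n$ the conditions (i) $f_i(M)=0$ for all $i$, (ii) $f_1\circ\gamma_M\equiv0$ near $0$, and (iii) $\chi\equiv1$ along $\gamma_M$ are equivalent. Writing $L(g)=\langle\grad g,\xi_n\rangle$, the numerator of $L(f_i)$ is $f_{i+1}$ with nonvanishing denominator, so along $\gamma_M$ one has $\tfrac{d}{dt}(f_i\circ\gamma_M)=L(f_i)\circ\gamma_M$, whose value at $0$ is a nonzero multiple of $f_{i+1}(M)$ as soon as $f_i(M)=0$. Differentiating $f_1\circ\gamma_M$ repeatedly and feeding in the vanishing of the lower $f_j$ shows that every $t$-derivative at $0$ vanishes precisely when all $f_i(M)=0$; since $f_1\circ\gamma_M$ is real-analytic, this is equivalent to its identical vanishing near $0$, which in turn is (iii) because $\chi-1=f_1/\prod_j(a_j,v_j,w_j)$ has nonzero denominator.

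The inclusion $\overline{F_{\Theta_n}}\subseteq Z$ is then immediate: a flexible mesh moves along a piece of an integral curve of $\xi_n$ (the theorem on motion trajectories), every configuration along the motion is flexible hence infinitesimally flexible, so $\chi\equiv1$ there and the lemma gives $M\in Z$; as $Z$ is closed this passes to the closure. For $Z\subseteq\overline{F_{\Theta_n}}$ I would show that a generic $M\in Z$ is genuinely flexible. By the lemma $\chi\equiv1$ along $\gamma_M$, and on $\{\chi=1\}$ the vector $\xi_n$ is an honest infinitesimal flexion of the entire mesh: this is Proposition~\ref{basis} combined with Corollary~\ref{flex}, the running product in the definition of $\xi_n$ being exactly the bookkeeping that matches the one-parameter local flexions across the shared side-faces, with the closing-up relation around the central polygon being $\chi=1$. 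Since $\dot\gamma_M$ is then an infinitesimal flexion at each point, every squared edge length and face angle has vanishing derivative along $\gamma_M$ and is therefore constant, so $\gamma_M$ is a genuine isometric motion, nontrivial because $\xi_n(M)\neq0$; hence $M\in F_{\Theta_n}$. As the configurations satisfying the rank hypotheses of Proposition~\ref{basis} at every vertex are dense in $Z$, this yields $Z\subseteq\overline{F_{\Theta_n}}$, and combined with the first inclusion, $\overline{F_{\Theta_n}}=Z$.

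I expect this last step to be the main obstacle. Checking that $\xi_n|_{\{\chi=1\}}$ is a bona fide infinitesimal flexion — that is, that the compatibility across shared faces is encoded exactly by the running product and not merely the within-vertex relation $\langle\dot v_i,w_i\rangle+\langle v_i,\dot w_i\rangle=0$ that one verifies directly — is the delicate point, and it is precisely at the non-generic configurations of $\{\chi=1\}$, where the rank condition of Proposition~\ref{basis} degenerates although the mesh still lies in $\Theta_n$, that flexibility cannot be asserted outright. This is what forces the statement to be phrased with the closure rather than with $F_{\Theta_n}$ itself: one establishes flexibility on the dense generic locus and then takes limits.
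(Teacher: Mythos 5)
Your proof is correct, and for the hard inclusion it takes a genuinely different route from the paper's. The paper argues: by Hilbert's basis theorem only finitely many of the $f_i$ matter, so the right-hand side is (the trace on $\Theta_n$ of) an algebraic variety $V$; at a \emph{nonsingular} point of $V$ the field $\xi_n$ is tangent to $V$, so the local integral curve stays in $V\subseteq\{\chi=1\}$ and yields a finite motion (citing Arnold); since nonsingular points are dense in $V$, one gets $V\cap\Theta_n\subseteq\overline{F_{\Theta_n}}$ --- this density step is exactly why the paper states the result for the closure. You replace the nonsingular/density argument by real-analyticity of $f_1\circ\gamma_M$: the triangular relation between the iterated Lie derivatives $L^kf_1$ and the $f_{k+1}$ (with nonvanishing denominators) shows all derivatives of $f_1\circ\gamma_M$ at $0$ vanish iff all $f_i(M)=0$, whence $\chi\equiv1$ along the integral curve through \emph{every} point of $Z$, singular or not. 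This is cleaner and actually proves the stronger equality $Z=F_{\Theta_n}$. Your closing hedge is unnecessary: the rank hypotheses of Proposition~\ref{basis} (noncollinearity of $v_i,w_i$ and nonvanishing of $(a_{i-1},v_i,w_i)$ and $(a_i,v_i,w_i)$) hold at every point of $\Theta_n$ under the conventions that make $\chi$ and $\xi_n$ defined there, and the check that $\xi_n$ is an infinitesimal flexion on $\{\chi=1\}$ is a direct computation valid everywhere --- $\langle\dot v_i,w_i\rangle+\langle v_i,\dot w_i\rangle=P_{i-1}(a_{i-1},v_i,w_i)-P_i(a_i,v_i,w_i)=0$ within each corner face, equal angular velocities $P_i$ about $a_i$ for $w_i$ and $v_{i+1}$ on each shared edge face, and the cycle closing up precisely when $\chi=1$ --- so no genericity or limiting argument is needed. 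Both proofs share the final (in the paper implicit, in your write-up explicit) step of integrating the infinitesimal flexion into a finite one by noting that all intra-face distances are first integrals of $\xi_n$ along a curve on which $\chi\equiv1$.
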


\begin{proof}
Notice that the vector field $\xi(M)$ is non-zero at all the
points of $\Theta_n$.

By Proposition~\ref{basis}, for a flexible mesh $M$ not contained
in $\Sigma_n$ the velocity vector is proportional to the vector
$\xi(M)$, and therefore satisfies all the conditions $f_n=0$.

On the other hand all functions $f_n$ are algebraic and
therefore their common locus is an algebraic variety (by Hilbert's
basis theorem it is sufficient to take only a finite number of such
equations). Therefore in a nonsingular point of this variety we
have all the existence conditions for a finite motion along the
field $\xi(M)$ (see in~\cite{Arn}). Since the set of singularities
of an algebraic variety is nowhere dense in this variety, the
closure of the set of nonsingular points of a variety is the
variety itself.
\end{proof}

\subsection{Vector calculus of flexibility conditions}

In practice it is quite hard to calculate the above $f_n$
explicitly. To check some given configuration we recommend a
slightly different technique.

Since $\chi(M)=0$ along any trajectory, we immediately have many
conditions of flexibility: $\chi^{(n)}(M)=0$, where the derivatives
are taken with respect of the time parameter of the vector field.
We recommend to simplify the obtained function after each round of differentiation
using previous ones.

Let us introduce basic ingredients to calculate derivatives. We use a prime to indicate differentiation.

\begin{longtable}{l}
$
\langle a_{i},v_i\rangle'=-(a_{i-1},a_{i},v_i)
\prod\limits_{j=1}^{i-1} \frac{(a_{j-1},v_j,w_j)}{(a_j,v_j,w_j)};$\\$
\langle a_{i-1},w_i\rangle'=(a_{i-1},a_i,w_i)
\prod\limits_{j=1}^{i} \frac{(a_{j-1},v_j,w_j)}{(a_j,v_j,w_j)};$\\$
(a_{i-1},a_i,v_i)'= \Big(\langle a_{i-1},a_{i-1}\rangle \langle
a_{i},v_i\rangle - \langle a_{i-1},a_i\rangle \langle
a_{i-1},v_i\rangle   \Big)
\prod\limits_{j=1}^{i-1}
\frac{(a_{j-1},v_j,w_j)}{(a_j,v_j,w_j)};$\\$
(a_{i-1},a_i,w_i)'= \Big(\langle a_{i-1},a_{i}\rangle\langle
a_{i},w_i\rangle - \langle a_i,a_{i}\rangle \langle
a_{i-1},w_i\rangle \Big) \prod\limits_{j=1}^{i}
\frac{(a_{j-1},v_j,w_j)}{(a_j,v_j,w_j)};$\\$
\begin{aligned}
 (a_k,v_i,w_i)' = &
\begin{array}{c}
\Big( \langle a_k, v_i\rangle \langle a_{i-1},w_i\rangle- \langle
a_{i-1},a_k\rangle \langle v_i,w_i\rangle \Big)
\prod\limits_{j=1}^{i-1} \frac{(a_{j-1},v_j,w_j)}{(a_j,v_j,w_j)}
+\\
\end{array}
\\
&
\begin{array}{l}
\Big( \langle a_{i},a_{k}\rangle \langle v_i,w_{i}\rangle-\langle
a_i,v_i\rangle \langle a_k,w_{i}\rangle\Big)
\prod\limits_{j=1}^{i} \frac{(a_{j-1},v_j,w_j)}{(a_j,v_j,w_j)}.
\end{array}
\end{aligned}
$%
\end{longtable}

\noindent
In fact, to calculate $\chi^{(n)}(M)$, one only needs the cases
$k=i{-}1$ and $k=i$ for the last formula.

\begin{example}
The second condition $(\chi(M))'=0$ is equivalent to

\begin{longtable}{l}
$
\displaystyle
\sum\limits_{i=1}^{n}\left(
\frac{
 \langle a_i, v_i\rangle \langle a_{i-1},w_i\rangle-
\langle a_{i-1},a_i\rangle \langle v_i,w_i\rangle
}{(a_i,v_i,w_i)}
\prod\limits_{j=1}^{i-1}\frac{(a_{j-1},v_j,w_j)}{(a_i,v_i,w_i)}
\right)+
$
\\
$
\displaystyle
\sum\limits_{i=1}^{n}\left( \frac{ \langle a_{i},a_{i}\rangle
\langle v_i,w_{i}\rangle-\langle a_i,v_i\rangle \langle
a_i,w_{i}\rangle}{(a_i,v_i,w_i)}
\prod\limits_{j=1}^{i} \frac{(a_{j-1},v_j,w_j)}{(a_j,v_j,w_j)}
\right)
-
$
\\
$%
\displaystyle
\sum\limits_{i=1}^{n}\left(
\frac{
\langle a_{i-1}, v_i\rangle \langle a_{i-1},w_i\rangle-
\langle a_{i-1},a_{i-1}\rangle \langle v_i,w_i\rangle
}{(a_{i-1},v_i,w_i)}
\prod\limits_{j=1}^{i-1}
\frac{(a_{j-1},v_j,w_j)}{(a_j,v_j,w_j)}\right)
-
$
\\
$
\displaystyle
\sum\limits_{i=1}^{n}\left(
\frac{
 \langle a_{i-1},a_{i}\rangle \langle
v_i,w_{i}\rangle-\langle a_i,v_i\rangle \langle
a_{i-1},w_{i}\rangle}{(a_{i-1},v_i,w_i)}
\prod\limits_{j=1}^{i}
\frac{(a_{j-1},v_j,w_j)}{(a_j,v_j,w_j)}\right)=0.
$
\\
\end{longtable}
\end{example}

\section{Internal flexibility conditions for ($3{\times}3$)-meshes}\label{Sec3}
In this section we describe a technique to verify the flexibility
of a ($3{\times}3$)-mesh with planar faces in terms of inner
geometry of a mesh, i.e.,~in terms of the fixed angles between the
edges of the same faces.

 \vspace{2mm}

Consider a $(3{\times} 3)$-mesh with a planar quadrangle
$A_1A_2A_3A_4$ in the middle. Denote the angles between the faces
adjacent to $A_iA_{i+1}$ by $\omega_i$ ($i=1,\ldots, n$).

\begin{lemma}
The following relation holds for any vertex $A_i$
$$
\begin{array}{l}
\cos\alpha_i\cos\beta_i\cos\varphi_i+
\sin\alpha_i\sin\beta_i\cos\varphi_i\cos\omega_{i-1}+
\sin\alpha_i\cos\beta_i\sin\varphi_i\cos\omega_{i}-\\
\cos\alpha_i\sin\beta_i\sin\varphi_i\cos\omega_{i-1}\cos\omega_i+
\sin\beta_i\sin\varphi_i\sin\omega_{i-1}\sin\omega_i=\cos
\gamma_i.
\end{array}
$$
Denote this equation by $(R_i)$.
\end{lemma}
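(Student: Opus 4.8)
The identity $(R_i)$ is a statement of spherical trigonometry localized at the single vertex $A_i$, so the plan is to compute directly with unit vectors there. Intersecting the four faces meeting at $A_i$ with a small sphere centered at $A_i$ produces a spherical quadrilateral whose vertices are the directions of the edges $A_iA_{i+1}$, $A_iA_{i-1}$, $A_iV_i$, $A_iW_i$; its four sides are the face angles $\alpha_i$, $\beta_i$, $\gamma_i$, $\varphi_i$ in cyclic order, and two of its interior angles are the dihedral angles $\omega_{i-1}$ (along $A_{i-1}A_i$) and $\omega_i$ (along $A_iA_{i+1}$). The quantity to be computed, $\cos\gamma_i$, is the cosine of the arc joining the directions of $V_i$ and $W_i$.

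First I would introduce unit vectors $p,q,r,s$ along $A_{i+1}{-}A_i$, $A_{i-1}{-}A_i$, $V_i{-}A_i$, $W_i{-}A_i$, so that $\cos\gamma_i=\langle r,s\rangle$ is exactly the target. Placing the fixed central face in the $xy$-plane, set $q=(1,0,0)$ and $p=(\cos\alpha_i,\sin\alpha_i,0)$, which records $\langle p,q\rangle=\cos\alpha_i$. Next I would reconstruct $r$ and $s$ by tilting out of the central plane through the dihedral angles. At $q$ the in-plane tangent toward $p$ is $(0,1,0)$, and rotating the face $A_{i-1}A_iV_i$ through $\omega_{i-1}$ about the axis $q$ turns this tangent into $(0,\cos\omega_{i-1},\sin\omega_{i-1})$, whence
$$
r=\cos\beta_i\,q+\sin\beta_i\,(0,\cos\omega_{i-1},\sin\omega_{i-1}).
$$
Building the tangent frame at $p$ and tilting through $\omega_i$ gives symmetrically
$$
s=\cos\varphi_i\,p+\sin\varphi_i\,(\sin\alpha_i\cos\omega_i,-\cos\alpha_i\cos\omega_i,\sin\omega_i).
$$

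Finally I would expand $\langle r,s\rangle$ coordinatewise and collect terms. The computation yields exactly five monomials, namely $\cos\alpha_i\cos\beta_i\cos\varphi_i$, the two products $\sin\alpha_i\sin\beta_i\cos\varphi_i\cos\omega_{i-1}$ and $\sin\alpha_i\cos\beta_i\sin\varphi_i\cos\omega_i$, the mixed term $-\cos\alpha_i\sin\beta_i\sin\varphi_i\cos\omega_{i-1}\cos\omega_i$, and $\sin\beta_i\sin\varphi_i\sin\omega_{i-1}\sin\omega_i$; these are precisely the five terms of $(R_i)$, which proves the lemma.

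The only delicate point is fixing orientations consistently: both out-of-plane tilts must be taken to the same side of the central plane, so that $\omega_{i-1}$ and $\omega_i$ enter with matching signs, and the two tangent frames at $p$ and $q$ must be oriented compatibly. These conventions are not free — they are forced by demanding that $\langle p,q\rangle$, $\langle q,r\rangle$, and $\langle s,p\rangle$ reproduce $\cos\alpha_i$, $\cos\beta_i$, and $\cos\varphi_i$ respectively. Once they are pinned down, the remaining work is a routine dot-product expansion, so I expect the sign bookkeeping, rather than any single calculation, to be the main obstacle.
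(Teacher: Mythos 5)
Your proof is correct and follows essentially the same route as the paper: fix coordinates at $A_i$ with the central face in the $xy$-plane, express the unit edge directions toward $V_i$ and $W_i$ by tilting out of that plane through the dihedral angles $\omega_{i-1}$ and $\omega_i$, and expand the dot product to obtain $\cos\gamma_i$. Your expansion reproduces the five terms of $(R_i)$ exactly, and your remark that the orientation conventions are forced by matching $\cos\alpha_i$, $\cos\beta_i$, $\cos\varphi_i$ is a sensible way to pin down the signs that the paper leaves implicit.
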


\begin{proof}
Note that the values of the angles do not depend on the lengths of
vectors $a_i$, $a_{i-1}$, $v_i$, $w_i$. So we can choose them equal $1$.
Let us make calculations in the coordinate system where
$a_i=(1,0,0)$, and $a_{i-1}=(\cos\alpha, \sin\alpha,0)$. Then we
have
$$
\begin{array}{l}
v_i=(\cos\varphi_i, \sin\varphi_i\cos\omega_i,
\sin\varphi\sin\omega_i),\\
w_i=\cos\beta_i(\cos\alpha_i,\sin\alpha_i,0)+
\sin\beta_i\Big(\cos\omega_{i-1}(\sin\alpha_i,-\cos\alpha_i,0)+\sin\omega_{i-1}(0,0,1)\Big)
\end{array}
$$
The formula of the lemma coincides with $\langle
v_i,w_i\rangle=\cos\gamma_i$ and therefore holds.
\end{proof}

We get the desired equations in 4 steps.

\vspace{2mm}

{\it Step 1.} Denote $\cos \omega_i$ by $t_i$. The first step is
to get rid of $\sin\omega_i\sin\omega_{i-1}$ in formula $(R_i)$.
This can be done by putting the corresponding term to the right
and the rest to the left, than squaring both sides, and finally substituting
$\sin^2\omega_i\sin^2\omega_{i-1}=(1-t_i^2)(1-t_{i-1}^2)$. Denote
the resulting equation by $(\bar{R}_i)$. It leads to
$$
C_{1,i}+C_{2,i}t_{i-1}+C_{3,i}t_{i}+C_{4,i}t_{i-1}^2+
C_{5,i}t_{i-1}t_{i}+C_{6,i}t_{i}^2+C_{7,i}t_{i-1}^2t_{i}+C_{8,i}t_{i-1}t_{i}^2+C_{9,i}t_{i-1}^2t_{i}^2=0,
$$
where
$$
\begin{array}{lll}
C_{1,i}=(\cos\alpha_i\cos\beta_i\cos\varphi_i-\cos\gamma_i)^2-\sin^2\beta_i\sin^2\varphi_i;\\
C_{2,i}=2(\cos\alpha_i\cos\beta_i\cos\varphi_i-\cos\gamma_i)\sin\alpha_i\sin\beta_i\cos\varphi_i;\\
C_{3,i}=2(\cos\alpha_i\cos\beta_i\cos\varphi_i-\cos\gamma_i)\sin\alpha_i\cos\beta_i\sin\varphi_i;\\
C_{4,i}=\sin^2\alpha_i\sin^2\beta_i\cos^2\varphi_i+\sin^2\beta_i\sin^2\varphi_i;\\
C_{5,i}=2(\cos\beta_i\cos\varphi_i-2\cos^2\alpha_i\cos\beta_i\cos\varphi_i
+\cos\alpha_i\cos\gamma_i)\sin\beta_i\sin\varphi_i;\\
C_{6,i}=\sin^2\alpha_i\cos^2\beta_i\sin^2\varphi_i+\sin^2\beta_i\sin^2\varphi_i;\\
C_{7,i}=-2\sin\alpha_i\cos\alpha_i\sin^2\beta_i\sin\varphi_i\cos\varphi_i;\\
C_{8,i}=-2\sin\alpha_i\cos\alpha_i\sin\beta_i\cos\beta_i\sin^2\varphi_i;\\
C_{9,i}=-\sin^2\alpha_i\sin^2\beta_i\sin^2\varphi_i.\\
\end{array}
$$

 {\it Step 2.} Now we recursively evaluate $t_i^2$ as a
linear function in $t_i$ whose coefficients are rational functions
of $t_{i-1}$. We will need only the formulae for $t_2^2$ and
$t_4^2$.

{\it Step 3.} We use $(\bar{R}_3)$ and the formula for $t_2^2$ given in Step~2
to evaluate $t_2$ as a function of $t_1$ and $t_3$,
and analogously we employ $(\bar{R}_4)$ with the formula for $t_4^2$ given in Step~2  to
evaluate $t_4$ as a function of $t_1$ and $t_3$. We get
$$
\begin{array}{l}
t_2=-\frac{(C_{9,2}t_{1}^2+C_{8,2}t_{1}+C_{6,2})(C_{6,3}t_{3}^2+C_{3,3}t_{3}+C_{1,3})-
(C_{9,3}t_{3}^2+C_{7,3}t_{3}+C_{4,3})(C_{4,2}t_{1}^2+C_{2,2}t_{1}+C_{1,2})}
{(C_{9,2}t_{1}^2+C_{8,2}t_{1}+C_{6,2})(C_{8,3}t_{3}^2+C_{5,3}t_{3}+C_{2,3})-
(C_{9,3}t_{3}^2+C_{7,3}t_{3}+C_{4,3})(C_{7,2}t_{1}^2+C_{5,2}t_{1}+C_{3,2})};\\
t_4=-\frac{(C_{9,1}t_{1}^2+C_{7,1}t_{1}+C_{4,1})(C_{4,4}t_{3}^2+C_{2,4}t_{3}+C_{1,4})-
(C_{9,4}t_{3}^2+C_{8,4}t_{3}+C_{6,4})(C_{6,1}t_{1}^2+C_{3,1}t_{1}+C_{1,1})}
{(C_{9,1}t_{1}^2+C_{7,1}t_{1}+C_{4,1})(C_{7,4}t_{3}^2+C_{5,4}t_{3}+C_{3,4})-
(C_{9,4}t_{3}^2+C_{8,4}t_{3}+C_{6,4})(C_{8,1}t_{1}^2+C_{5,1}t_{1}+C_{2,1})}.\\
\end{array}
$$

{\it Step 4.} Now for $t_1$ and $t_3$ we have the system
$$
\left\{\begin{array}{lcl}
R_1(t_1,t_4(t_1,t_3))&=&0\\
R_2(t_1,t_2(t_1,t_3))&=&0\\
\end{array}
\right. .
$$
First, note that the functions $R_1$ and $R_2$ are rational in
variables $t_1$ and $t_3$. So we can restrict ourselves to
consider the numerators. Denote these numerators by
$f_1(t_1,t_3)$, and $f_2(t_1,t_3)$. The functions $f_1$ and $f_2$
are polynomials of degree 10, and we need to check if these two
polynomials define a non-discrete set. This happens only if $f_1$
and $f_2$ have a common nonconstant factor. Take a resultant of
$f_1$ and $f_2$ considered as polynomials in $t_3$. This resultant
denote it by $p(t_1)$ is a polynomial in $t_1$.

\begin{theorem}
A mesh in $\Theta_n$ is flexible if and only if all the coefficients
of the polynomial $p(t_1)$ equal zero.
\end{theorem}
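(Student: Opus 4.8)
The plan is to read the system $(R_1),(R_2),(R_3),(R_4)$ as the exact analytic description of all configurations of the mesh that share its face angles, and then to translate ``existence of a flex'' into ``the real solution set of this system is positive--dimensional.'' By the preceding Lemma, each relation $(R_i)$ is precisely the closing condition $\langle v_i,w_i\rangle=\cos\gamma_i$ at the vertex $A_i$, written in the cosines $t_i=\cos\omega_i$ of the dihedral angles along the central edges. With the central face $A_1A_2A_3A_4$ held fixed and the face angles prescribed, the directions of $v_i$ and $w_i$ are determined by $\omega_i$ and $\omega_{i-1}$, so the four numbers $t_1,\dots,t_4$ determine the whole mesh; a point of $(\r^3)^{3n}\setminus\Sigma_n$ keeping the given face angles is a configuration iff $(t_1,\dots,t_4)$ solves $(R_1),\dots,(R_4)$, the sign of each $\sin\omega_i$ being fixed by the geometry. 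Hence the given mesh is flexible iff this real solution set contains a nontrivial arc through $(t_1^0,\dots,t_4^0)$, i.e. iff the set is positive--dimensional there.

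First I would record the cyclic structure: $(R_i)$ couples only $t_{i-1}$ and $t_i$, and is quadratic in each. Consequently, if $t_1$ were constant along a motion, then $(R_2)$ would force $t_2$ into a fixed finite set, hence constant by continuity, then $(R_3)$ would fix $t_3$ and $(R_4)$ would fix $t_4$; so a genuine flex must move $t_1$, and by the same token $t_3$. Next I would run the elimination of Steps~1--4. Squaring turns $(R_i)$ into the polynomial $(\bar R_i)$, Step~2 reduces $t_i^2$ to an affine function of $t_i$, and Steps~3--4 solve the cycle for rational maps $t_2(t_1,t_3)$ and $t_4(t_1,t_3)$; the conditions $f_1=f_2=0$ are exactly the requirements that these solved values additionally satisfy $(\bar R_1)$ and $(\bar R_2)$, with $f_1,f_2$ the numerators of the resulting rational expressions. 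Thus the graph of $(t_1,t_3)\mapsto(t_2,t_4)$ identifies the $(\bar R_i)$--solution set with $\{f_1=f_2=0\}$, and since on $\Theta_n$ all denominators are nonzero this identification preserves the dimension of the solution set.

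It then remains to convert ``positive--dimensional common zero set'' into the resultant condition. Two plane curves $f_1=0$ and $f_2=0$ meet in an infinite (equivalently positive--dimensional) set iff $f_1,f_2$ share a nonconstant common factor $g\in\r[t_1,t_3]$; and because a flex moves both $t_1$ and $t_3$, the component $g=0$ carrying it is neither a vertical nor a horizontal line, so $\deg_{t_3}g\ge 1$. Finally, viewing $f_1,f_2$ as polynomials in $t_3$ over $\r(t_1)$, the resultant satisfies $p(t_1)=\operatorname{Res}_{t_3}(f_1,f_2)\equiv 0$ (all coefficients vanish) exactly when $f_1,f_2$ admit a common factor of positive degree in $t_3$. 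Chaining these equivalences gives: the mesh is flexible $\Longleftrightarrow$ $f_1,f_2$ share a factor nonconstant in $t_3$ $\Longleftrightarrow$ all coefficients of $p(t_1)$ vanish.

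The clean direction is ``flexible $\Rightarrow p\equiv 0$'': a real flex supplies infinitely many real common zeros of $f_1,f_2$ with $t_1,t_3$ both varying, which forces the common factor and the identical vanishing of the resultant. The delicate direction, and the main obstacle, is the converse. From $p\equiv 0$ one obtains a common factor only a priori over $\c$, so one must (i) secure reality of the carrying component and existence of real points near $(t_1^0,t_3^0)$, so that the curve yields an honest real motion; and (ii) control the squaring of Step~1, since a branch of $(\bar R_i)$--solutions corresponds to $(R_i)$--solutions only after a consistent choice of the signs of $\sin\omega_{i-1}\sin\omega_i$. Both issues I would handle by working in a neighborhood of the given genuine configuration, where these signs are locally constant (as $\sin\omega_i\neq 0$ on $\Theta_n$); the residual point requiring the most care is to check that $(t_1^0,t_3^0)$ lies on the positive--dimensional component of $\{f_1=f_2=0\}$ rather than on its finite residual part.
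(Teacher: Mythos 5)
Your overall route is the same as the paper's: translate flexibility into positive-dimensionality of the common zero set of $(R_1),\dots,(R_4)$, eliminate $t_2$ and $t_4$ to reduce to the plane curves $f_1=f_2=0$, and characterize a positive-dimensional intersection by the identical vanishing of the resultant $p(t_1)$. Your forward direction (flexible $\Rightarrow p\equiv 0$), including the observation that a genuine flex must move both $t_1$ and $t_3$ so the common component is not a coordinate line, matches the paper's argument, which likewise appeals to Proposition~\ref{basis} to guarantee that all $t_i$ are nonconstant along a motion.

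The gap is in the converse, and it is exactly the point you flag at the end but do not close: from $p\equiv 0$ you get a common nonconstant factor of $f_1,f_2$ and hence a positive-dimensional component of $\{f_1=f_2=0\}$, but nothing so far prevents the parameter point $(t_1^0,t_3^0)$ of the \emph{given} mesh from being an isolated real point of the intersection, lying on the finite residual part rather than on that component; in that case the mesh would satisfy the coefficient conditions without being flexible, and the theorem as stated would fail. Working in a small neighborhood to fix the signs of $\sin\omega_i$ does not address this. The paper closes the loophole with a rigidity-theoretic argument rather than an algebraic one: if the common solution set of the $R_i=0$ were zero-dimensional at the given configuration while the system has at most three independent equations, the mesh would be a limit of flexible meshes with motion curves shrinking to it, hence infinitesimally flexible with more than one degree of freedom; but Proposition~\ref{basis} shows the space of infinitesimal motions of a mesh in $\Theta_n$ is exactly one-dimensional, so such degenerate configurations are excluded from $\Theta_n$. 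You need some argument of this kind (or another device) to place $(t_1^0,t_3^0)$ on the positive-dimensional component; the same step also disposes of your worry about reality of the carrying component, since it produces an actual real arc of configurations through the given mesh. Without it the proof establishes only one implication.
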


%Note that we do not check here the realizability conditions for
%a face construction in $\r^3$.

\begin{proof} We give an outline of the proof.

It is clear that if a mesh is flexible then the set of common solutions of
$R_i=0$ for $i=1,2,3,4$ is nondiscrete and therefore one of the
equations can be eliminated. By Proposition~\ref{basis} in the case
of meshes in $\Theta_n$ all the $t_i$'s are nonconstant while the mesh is
moving, so the resultant $p(t_1)$ is identically zero.

Suppose now that all the coefficients of the polynomial $p(t_1)$ equal zero,
then there are at most three independent equations among $R_i=0$ for $i=1,2,3,4$.
The common set of solutions of the equations $R_i=0$ is almost always at least one-dimensional.

The dimension of this set is zero only if the corresponding mesh
is a limit of a sequence of flexible meshes with possible motion
curves shrinking to this mesh.
Such mesh is infinitesimally flexible with more than one
degrees of freedom. By Proposition~\ref{basis} such meshes are not in $\Theta_n$.
\end{proof}

\begin{corollary}
Flexibility of meshes in $\Theta_n$ is determined by the shapes of
faces $($but not of an embedding in the space as a mesh$)$. \qed
\end{corollary}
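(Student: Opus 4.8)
The plan is to read the claim off directly from the explicit construction of the polynomial $p(t_1)$ carried out in Steps~1--4, combined with the preceding theorem. By that theorem a mesh in $\Theta_n$ is flexible if and only if every coefficient of $p(t_1)$ vanishes, so it suffices to exhibit each such coefficient as a function of the face angles alone, carrying no dependence on the way the mesh is folded in $\r^3$.

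First I would track the angular dependence through the whole chain. The coefficients $C_{j,i}$ written out in Step~1 are manifestly functions of the four angles $\alpha_i,\beta_i,\gamma_i,\varphi_i$ only; the dihedral angles $\omega_i$ enter $(R_i)$ solely through the unknowns $t_i=\cos\omega_i$ and never appear in the $C_{j,i}$ themselves. The recursive elimination of Step~2 and the rational expressions for $t_2$ and $t_4$ in Step~3 are obtained by purely algebraic manipulation whose only inputs are the $C_{j,i}$ and the indeterminates $t_1,t_3$; hence $f_1(t_1,t_3)$ and $f_2(t_1,t_3)$ are polynomials in $t_1,t_3$ whose coefficients are rational functions of the $C_{j,i}$. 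Forming the resultant with respect to $t_3$ in Step~4 is again an algebraic operation in those coefficients, so every coefficient of $p(t_1)$ is a rational function of the $C_{j,i}$, and therefore a function of the angles $\alpha_i,\beta_i,\gamma_i,\varphi_i$, $i=1,\ldots,4$, alone.

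It then remains to interpret these angles geometrically. The angle $\alpha_i$, being $\angle A_{i+1}A_iA_{i-1}$, is the interior angle of the central quadrangle at $A_i$, while $\beta_i,\gamma_i,\varphi_i$ are the angles that the three strip faces meeting at $A_i$ subtend there; taken over all $i$ these are precisely the face angles at the vertices $A_i$, and they are fixed by the congruence classes (shapes) of the faces. The folding of the mesh in space is encoded entirely by the dihedral angles $\omega_i$, which occur in the flexibility criterion only as the variables of $p$ and never as fixed parameters. Consequently two meshes in $\Theta_n$ assembled from pairwise congruent faces yield identical polynomials $p(t_1)$, regardless of their embeddings, and so are flexible or non-flexible together, which is exactly the assertion of the corollary.

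The step I expect to require the most care is the first one: checking that the whole sequence of substitutions really leaves the $C_{j,i}$ as the sole angular inputs, so that no dependence on $\omega_i$ re-enters through the eliminations of Steps~2--3 or through the choice of which equation is discarded. Since every operation involved is algebraic in the $C_{j,i}$ and the indeterminates $t_i$, and since the denominators appearing in Steps~3--4 are nonzero on $\Theta_n$, this is ultimately a matter of bookkeeping rather than of any genuine geometric difficulty.
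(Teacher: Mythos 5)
Your proposal is correct and follows essentially the same route as the paper, which states this corollary with no written proof because it is immediate from the preceding theorem: the flexibility criterion is the vanishing of the coefficients of $p(t_1)$, and those coefficients are built entirely from the $C_{j,i}$ of Step~1, which depend only on the face angles $\alpha_i,\beta_i,\gamma_i,\varphi_i$, with the embedding data $\omega_i$ entering only through the eliminated indeterminates $t_i$. Your write-up simply makes this bookkeeping explicit.
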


To simplify symbolic calculations we recommend to compute the
resultants of $f_1(n,t_3)$ and $f_2(n,t_3)$ for $-50 \le n \le 50$.
If all 101 (=$10^2+1$) resultants evaluate to zero, than the polynomial
$p(t_1)$, which has at most 100 zeroes, has all zero coefficients.

\vspace{2mm}

Finally, we give an algorithm that calculates the functions
$f_1(x,y)$ and $f_2(x,y)$ (as $f[1]$ and $f[2]$ in the output
below).

\vspace{1mm}
\begin{quote}\footnotesize
\begin{verbatim}

TwoEquations:=proc(alpha,beta,gama,phi,N)  local i,C,R,t,f,tn,td;

for i from 1 to 4 do
  C[1,i]:=(cos(alpha[i])*cos(beta[i])*cos(phi[i])-cos(gama[i]))^2-
          sin(beta[i])^2*sin(phi[i])^2;
  ...
  # Here we write all the formulae for C[2,i],...,C[9,i] from Step 1
 end do:

tn[2]:=-((C[6,2]+C[8,2]*t[1]+C[9,2]*t[1]^2)*(C[1,3]+C[6,3]*t[3]^2+C[3,3]*t[3])-
        (C[7,3]*t[3]+C[4,3]+C[9,3]*t[3]^2)*(C[1,2]+C[2,2]*t[1]+C[4,2]*t[1]^2)):
td[2]:=((C[6,2]+C[8,2]*t[1]+C[9,2]*t[1]^2)*(C[8,3]*t[3]^2+C[5,3]*t[3]+C[2,3])-
       (C[7,3]*t[3]+C[4,3]+C[9,3]*t[3]^2)*(C[3,2]+C[5,2]*t[1]+C[7,2]*t[1]^2)):

tn[4]:=-((C[1,4]+C[2,4]*t[3]+C[4,4]*t[3]^2)*(C[4,1]+C[7,1]*t[1]+C[9,1]*t[1]^2)-
        (C[6,4]+C[8,4]*t[3]+C[9,4]*t[3]^2)*(C[1,1]+C[3,1]*t[1]+C[6,1]*t[1]^2)):
td[4]:=((C[3,4]+C[5,4]*t[3]+C[7,4]*t[3]^2)*(C[4,1]+C[7,1]*t[1]+C[9,1]*t[1]^2)-
       (C[6,4]+C[8,4]*t[3]+C[9,4]*t[3]^2)*(C[2,1]+C[5,1]*t[1]+C[8,1]*t[1]^2)):

RBar[1]:=(C[1,1]+C[6,1]*t[1]^2+C[3,1]*t[1])*td[4]^2+
      (C[8,1]*t[1]^2+C[5,1]*t[1]+C[2,1])*td[4]*tn[4]+
      (C[7,1]*t[1]+C[4,1]+C[9,1]*t[1]^2)*tn[4]^2:
RBar[2]:=(C[1,2]+C[2,2]*t[1]+C[4,2]*t[1]^2)*td[2]^2+
      (C[3,2]+C[5,2]*t[1]+C[7,2]*t[1]^2)*td[2]*tn[2]+
      (C[6,2]+C[8,2]*t[1]+C[9,2]*t[1]^2)*tn[2]^2:

f[1]:=(x,y)->(subs({t[1]=x,t[3]=y},simplify(RBar[1]))):
f[2]:=(x,y)->(subs({t[1]=x,t[3]=y},simplify(RBar[2]))):

return(f):
end proc;
\end{verbatim}
\end{quote}

Now we can compute resultants of $f[1](y_0,y)$ and $f[2](y_0,y)$
for any value $y_0$ with any precision $N$, or symbolically.

\vspace{1mm}
\begin{quote}\footnotesize
\begin{verbatim}
y0:=1;
N:=50;
f:=TwoEquations(alpha,beta,gama,phi,N):
ff[1]:=evalf[N](f[1](y0,y)):
ff[2]:=evalf[N](f[2](y0,y)):
evalf[N](resultant(ff[1]/coeftayl(ff[1],y=0,4),ff[2]/coeftayl(ff[2],y=0,4),y));
\end{verbatim}
\end{quote}

Still we do not know which of the listed 101 equations are
independent, the first interesting problem in this direction is
the following.

\section{Some questions for further study}

\subsection{Geometry of conditions}

In Section~\ref{Sec1} we studied the geometric conditions for infinitesimal
rigidity of meshes with planar faces, the following question arise here.

\begin{problem}
What is the geometric infinitesimal flexibility condition in case
of meshes with nonplanar faces?
\end{problem}
A similar problem remains open for the case of flexibility conditions
introduced in Section~\ref{Sec2}.
\begin{problem}
Find a geometric interpretation of the condition $\chi^{(n)}(M)=0$
for $n\ge 1$.
\end{problem}
We do not know how the condition $\chi'(M)=0$ is related to
the second-order rigidity condition defined by A.~I.~Bobenko,
et al. in~\cite{BHS}.

\subsection{Codimension calculation}

In Section~\ref{Sec2} we described the closure of all flexible meshes
$F_{\Theta_n}$ by algebraic equations. We conjecture that it is a complete
intersection. It is most probable that all the components of the
closure are of the same dimension.

\begin{remark}
Let us say a few words about the case of $(3{\times} 3)$-meshes.
Direct calculation of the tangent space in the points of
$F_{\Theta_4}$ based on the results of the second section shows that
the codimension of some of the irreducible components of
$F_{\Theta_4}$ is greater than or equal to~6. On the other hand, the
Voss surfaces (c.f.~\cite{Vos} and~\cite{BHS})
forms a set of flexible meshes of codimension~8. In general we
suspect that all the irreducible components of $F_{\Theta_4}$ has
the same codimension between~6 and~8. Note that for codimension~8
we know the following {\it families of flexible meshes}:

\vspace{2mm}

{\bf i)} Voss surfaces (defined by equations $\alpha_i=\gamma_i$,
$\beta_i=\varphi_i$ for $i=1,2,3,4$).

{\bf ii)} symmetric meshes where the angles at the vertices $A_1$
and $A_2$ coincide with the angles at the vertices $A_4$ and
$A_3$:
$$
\alpha_1=\alpha_4,\,\beta_1=\varphi_4,\,\gamma_1=\gamma_4,\,\varphi_1=\beta_4;\quad
\alpha_2=\alpha_3,\,\beta_2=\varphi_3,\,\gamma_2=\gamma_3,\,\varphi_2=\beta_3.
$$

{\bf iii)} symmetric meshes where the angles at the vertices $A_1$
and $A_4$ are the same as at the vertices $A_2$ and $A_3$.

{\bf iv)} meshes obtained from the cases i)---iii) by replacing some
$v_i$ and/or $w_j$ by $-v_i$ and/or $-w_j$. For instance, if we
replace $v_1$ by $-v_1$ in the case of Voss meshes, then we get
equations $\alpha_1+\gamma_1=\pi$, $\beta_1+\varphi_1=\pi$, and
$\alpha_i=\gamma_i$, $\beta_i=\varphi_i$ for $i=2,3,4$.

\vspace{2mm}

If the codimension of $F_{\Theta_4}$ is~8, then the natural problem
is {\it to complete the list of flexible meshes in general
position}.
\end{remark}

All this leads to the following problem.
\begin{problem}\label{prob3}
Find the codimension of the closure of $F_{\Theta_n}$ for $n=4,5,\ldots$
\end{problem}

The same question is interesting for the case of an algebraic set defined
by 101 equations in Section~\ref{Sec3}, {\it does this set has the same codimension?}

\vspace{5mm}

{\bf Acknowledgement.} The author is grateful to J.~Wallner for
constant attention to this work and Ch. M\"uller for useful remarks.
This work was performed at Technische Universit\"at Graz within
the framework of the project
``Computational Differential Geometry'' (FWF grant No.~S09209).

\vspace{.3cm}

% Giovanni Ceva 1647--1734 Italy


\begin{thebibliography}{99}
\bibitem{Arn}
V.~I.~Arnold, {\it Ordinary Differential Equations} (in Russian),
2nd ed., ``Nauka'', Moscow, 1975; English translation in the MIT
Press (1978).
\bibitem{Bia} L.~Bianchi, {\it Sulle deformazioni infinitesime
delle superficie flessibili ed inestendibili}, Rend. Lincei, v.~1
(1892), pp.~41--48.
\bibitem{BHS}
A.~I.~Bobenko, T.~Hoffmann, W.~K.~Schief, {\it On the
Integrability of Infinitesimal and Finite Deformations of
Polyhedral Surfaces}, Discrete Differential Geometry,
A.~I.~Bobenko, P.~Schr\"oder, J.~M.~Sullivan, G.~M.~Ziegler,
(eds.), Series: Oberwolfach Seminars, v.~38 (2008), pp.~67--93.
%
\bibitem{SG} H.~Graf, B.~Sauer,
{\it \"Uber Fl\"achenverbiegung in Analogie zur Verknickung
offener Facettenflache}, Math. Ann., v.~105 (1931), pp.~499--535.
%
\bibitem{Kok} A.~Kokotsakis, {\it \"Uber bewegliche Polyeder},
Math. Ann., v.~107 (1932), pp.~627--647.
%
\bibitem{PW} H.~Pottmann, J.~Wallner,
{\it Infinitesimally flexible meshes and discrete minimal
surfaces}, Monatshefte Math., vol.~153 (2008), pp.~347--365.

\bibitem{Sau} R.~Sauer, {\it Differenzengeometrie}, Springer (1970).

\bibitem{Vos} A.~Voss, {\it \"Uber diejenigen Fl\"achen,
auf denen zwei Scharen geod\"atischer Linien ein conjugirtes
System bilden}, Sitzungsber. Bayer. Akad. Wiss., math.-naturw.
Klasse (1888), pp.~95--102.

\end{thebibliography}
\end{document}